\newcommand{\al}{\alpha}
\newcommand{\la}{\lambda}
\newcommand{\La}{\Lambda}
\DeclareMathOperator{\taa}{Tab_{\al}(\mu)}
\DeclareMathOperator{\st}{SST_{\la}(\mu)}
\DeclareMathOperator{\sta}{SST_{\al}(\mu)}
\DeclareMathOperator{\STa}{SST_{\al^{+}}(\mu^{+})}
\DeclareMathOperator{\Hom}{Hom}
\DeclareMathOperator{\Ext}{Ext}
\DeclareMathOperator{\D}{\mathrm{\Delta}}
\DeclareMathOperator{\rad}{rad}
\newtheorem{theorem}{Theorem}[section]
\newtheorem{lemma}[theorem]{Lemma}
\newtheorem{proposition}[theorem]{Proposition}
\newtheorem{corollary}[theorem]{Corollary}
\newtheorem{definition}[theorem]{Definition}
\theoremstyle{remark}
\newtheorem{remark}[theorem]{Remark}
\newtheorem{remarks}[theorem]{Remarks}
\newtheorem{example}[theorem]{Example}
\numberwithin{equation}{section}
\title{A periodicity theorem for extensions of Weyl modules}
\author{Mihalis Maliakas}
\address{Department of Mathematics, University of Athens, Athens, Greece}
\curraddr{}
\email{mmaliak@math.uoa.gr}
\thanks{}
\author{Dimitra-Dionysia Stergiopoulou}
\address{Department of Mathematics, University of Athens, Athens, Greece}
\curraddr{School of Applied Mathematical and Physical Sciences, National Technical University of Athens, Athens, Greece}
\email{dstergiop@math.uoa.gr}
\thanks{}
\subjclass[2020]{20G05, 20C30, 05E10}
\keywords{Weyl modules, general linear group, extensions, cohomology of symmetric group}
\date{May 9, 2024}
\begin{document}

\begin{abstract}
In this paper, we study periodicity phenomena for modular extensions between Weyl modules and between  Weyl and simple modules of the general linear group that are associated to adding a power of the characteristic to the first parts of the involved partitions.\end{abstract}

\maketitle

\section{Introduction}
This article deals with homological aspects of polynomial representations of the general linear group $G=GL_n(K)$ over an infinite field $K$ of characteristic $p>0$.

For a positive integer $r$, let $S_K(n,r)$ denote the corresponding Schur algebra of $G$. The category of finite dimensional $S_K(n,r)$-modules is equivalent to the category of homogeneous polynomial representations of $G$ of degree $r$. Several important $S_K(n,r)$-modules are indexed by partitions $\la$ of $r$ with at most $n$ parts, such as the Weyl modules $\D(\la)$ and the simple modules $L(\la)$. The study of extension groups between such modules is one of the main topics in the polynomial representation theory of $G$. There are relatively few general results, especially those that relate extension groups that correspond to different degrees $r$. One such result is the row (or column) removal theorem for homomorphism spaces and extension groups or, more generally, the horizontal (or vertical) cuts theorem, see \cite{BG}, \cite{FL}, \cite{Dotil}, \cite{Ku2}. Another such result concerns complements of partitions \cite{FHKD}. One also has the degree reduction theorem \cite{Ku1}. These results are characteristic free. In \cite{He} and \cite{HK}, among other results, equalities between $p$-Kostka numbers are obtained that are related to adding a power of $p$ to the first parts of the involved partitions.

If $\la=(\la_1,\dots,\la_n)$ is a partition and $d$ a positive integer, we denote by $\la^+$ the partition $(\la_1+p^d, \la_2,\dots,\la_n)$. Let $S=S_K(n,r)$ and $S'=S_K(n,r+p^d)$. The main result of the present paper is the following.
\begin{theorem}
	Let $K$ be an infinite field of characteristic $p>0$, let $\la=(\la_1,\dots,\la_n)$, $\mu=(\mu_1,\dots,\mu_n)$  be partitions of $r$ and let $d$ be an integer such that $p^d >r-\la_1$. \begin{enumerate} \item If $\mu_2\le \la_1$, then $\Ext^i_{S}(\D(\lambda), \D(\mu)) \simeq\Ext^i_{S'}(\D(\lambda ^+), \D(\mu ^+))$ for all i. \item If $\la_1 \ge r/2$, then $\Ext^i_{S}(\D(\lambda), L(\mu)) \simeq \Ext^i_{S'}(\D(\lambda ^+), L(\mu ^+))$ for all i.\end{enumerate}
\end{theorem}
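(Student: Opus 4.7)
The plan is to compute both Ext groups through an explicit resolution of the first argument and then identify the two resulting complexes by means of a combinatorial bijection on tableaux.

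I would begin with a resolution $P_\bullet\to \D(\la)$ whose terms are direct sums of tensor products of divided powers $D^\al=D^{\al_1}\otimes\cdots\otimes D^{\al_n}$ indexed by tableau-like combinatorial data of shape $\la$, for instance a Koszul or Akin--Buchsbaum--Weyman style resolution. Applying $\Hom_S(-,\D(\m))$ and using the adjunction $\Hom_S(D^\al,M)\cong M_\al$ between divided powers and weight spaces, one obtains a cochain complex whose $i$-th term has a basis indexed by $\mathrm{SST}_\al(\m)$ and whose cohomology computes $\Ext^i_S(\D(\la),\D(\m))$.

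For part (1), I would construct, for each composition $\al$ arising in the resolution, a bijection
$$\iota_\al:\mathrm{SST}_\al(\m)\longrightarrow \mathrm{SST}_{\al^+}(\m^+)$$
that prepends $p^d$ boxes filled with $1$ at the start of the first row. The hypotheses $p^d>r-\la_1$ and $\m_2\le\la_1$ are exactly what is needed for $\iota_\al$ to be well defined: in any $T\in\mathrm{SST}_\al(\m)$ every entry equal to $1$ already lies in the first row, there is room to accommodate all $\m_1+p^d$ ones in the enlarged first row, and no entry $\ge 2$ is forced to migrate out of its original row. Next one verifies that the $\iota_\al$ assemble into a chain isomorphism between the complex computing $\Ext^\bullet_S(\D(\la),\D(\m))$ and the one computing $\Ext^\bullet_{S'}(\D(\la^+),\D(\m^+))$. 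Because the differentials act via straightening or Koszul-type contractions that touch only entries $\ge 2$, they are insensitive to the prepended block of $1$'s and the commutation is formal.

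For part (2), no direct tableau bijection is available, since passage from $L(\m)$ to $L(\m^+)$ is not captured by prepending $1$'s. I would instead induct downward on $\m$ in the dominance order, applying the long exact sequences of $\Ext$ attached to
$$0\to\rad\D(\m)\to\D(\m)\to L(\m)\to 0$$
and to its $+$-analogue. Each composition factor $L(\nu)$ of $\rad\D(\m)$ satisfies $\nu\vartriangleleft\m$, and the strengthened hypothesis $\la_1\ge r/2$ forces $\nu_2\le r/2\le\la_1$ for every partition $\nu$ of $r$, so the hypothesis of part (1) is automatic for every pair $(\la,\nu)$ entering the induction. Combining part (1) (applied to $\D(\m)$ and each intermediate $\D(\nu)$) with the inductive hypothesis for $L(\nu)$ and the five lemma then yields the isomorphism for $L(\m)$.

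The principal obstacle is the verification in part (1) that the $\iota_\al$ form a chain map: this is a combinatorial check depending on the precise form of the boundary operators in the chosen resolution, and although conceptually transparent (the prepended block of $1$'s is invisible to them) it requires a careful case analysis of the straightening formulas on the first row. A secondary difficulty in part (2) lies in organizing the dominance induction so that the $+$-shift interacts cleanly with extensions of simple modules, the hypothesis $\la_1\ge r/2$ being precisely the technical device that keeps every intermediate step within the range where part (1) applies.
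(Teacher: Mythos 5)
Your outline for part (1) is in essence the paper's strategy: the paper uses the Santana--Yudin projective resolution $B_*(\la)$, whose terms are direct sums of modules $S\xi_\al\simeq D(\al)$, and the bijection $T\mapsto T^+$ on $\mathrm{SST}_\al(\mu)$ (Lemma 3.1) to identify the terms of $\Hom_S(B_*(\la),\D(\mu))$ and $\Hom_{S'}(B_*(\la^+),\D(\mu^+))$. Two caveats. First, be careful which resolution you mean: an Akin--Buchsbaum--Weyman style resolution by divided powers is only available as a presentation in positive characteristic; the existence of a full projective resolution of this shape is exactly the nontrivial input from Santana--Yudin. Second, and more seriously, your claim that the differentials ``are insensitive to the prepended block of $1$'s'' and that ``the commutation is formal'' is false as stated: the components of the differential are given by multiplication rules in the Schur algebra involving products of multinomial coefficients (eqns. (4.2)--(4.3)), and these coefficients genuinely change when $p^d$ is added to the $(1,1)$-entry. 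The hypothesis $p^d>r-\la_1$ --- which you attribute to the well-definedness of the bijection $\iota_\al$, where in fact only $\mu_2\le\al_1$ is needed --- is used precisely here, to force the congruence $[\theta]\equiv[\theta^+]\bmod p$ (Lemmas 4.1 and 4.8(2)). So the chain-map verification is not a formality but the crux, and your proposal does not contain the idea needed to carry it out.

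For part (2) your route is genuinely different from the paper's, and it has a gap. The paper does not induct on dominance order; it shows directly that the term-wise isomorphism descends to $L(\mu)$, by proving (Lemma 4.11(2), using $\la_1\ge r/2$) that for $g\in\Hom_S(S\xi_\al,\D(\mu))$ one has $\mathrm{Im}\,g\subseteq\rad\D(\mu)$ if and only if $\mathrm{Im}\,g^+\subseteq\rad\D(\mu^+)$, whence $\Hom_S(S\xi_\al,L(\mu))\simeq\Hom_{S'}(S'\xi_{\al^+},L(\mu^+))$ compatibly with the differentials (Proposition 4.12). Your induction would require comparing $\Ext^i_S(\D(\la),\rad\D(\mu))$ with $\Ext^i_{S'}(\D(\la^+),\rad\D(\mu^+))$; but the assignment $\rad\D(\mu)\mapsto\rad\D(\mu^+)$ is not induced by any functor between the two module categories, so there is no morphism of long exact sequences to which the five lemma could be applied, and even at the level of composition factors you have no a priori control that $\rad\D(\mu)$ and $\rad\D(\mu^+)$ have matching composition series --- that decomposition numbers are preserved under the $+$ operation is a consequence of, not an input to, results of this kind. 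This step cannot be repaired without essentially constructing the explicit compatible maps that the paper builds.
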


Here the $\Ext$ groups are taken in the category of modules over the appropriate Schur algebra.  By a result of Donkin \cite{DoSchur1} this is the same as considering the $\Ext$ groups in the category of rational $G$-modules. 

Let us consider two special cases of (1) and their immediate implications for representations of the symmetric group $\mathfrak{S}_n$. Suppose $p>2$ and $r=n$. First, consider $i=0$. By applying the Schur functor to part (1), we obtain an isomorphism between Hom spaces of Specht modules \[\Hom_{\mathfrak{S}_n}(S^{\mu}, S^{\la})\simeq \Hom_{\mathfrak{S}_{n'}}(S^{\mu^+}, S^{\la^+}),\] where $n'=n+p^d$ (Corollary 5.1). This provides an answer to a question posed by Hemmer \cite[Problem 5.4]{Hem2}. In fact, the previous isomorphism can be generalized to higher extension groups $\Ext^i_{\mathfrak{S}_n}(S^{\mu}, S^{\la})$ by a result of Kleshchev and Nakano  \cite[Theorem 6.4(b)(ii)]{KN}, if $p>3$ and $0 \le i \le p-2$. 

Second, suppose $\mu$ consists of one part, $\mu=(n)$. Then by part (1) of our theorem we have $\Ext^i_{S}(\D(\lambda), \D(n)) \simeq\Ext^i_{S'}(\D(\lambda ^+), \D(n+p^d))$ for all $i$, $\lambda$ and $p$ if $d$ satisfies $p^d>n-\lambda_1$. By \cite[Corollary 6.3(b)(iii)]{KN}, we obtain the following periodicity result in the cohomology of the symmetric group \[H^i(\mathfrak{S}_n, S^{\la }) \simeq H^i(\mathfrak{S}_{n'}, S^{\la^+ }), \] for $1 \le i \le 2p-4$ (Corollary 5.3). This provides an answer to the special case of Problem 8.3.1 of Hemmer \cite{Hem1} that corresponds to $c=(1)$. A related but different result has been obtained by Nagpal and Snowden \cite[Theorem 5.1]{NS} (see Remark 5.4(2)) using the theory of FI-modules. Corollary 5.3 generalizes a result of James valid for $i=0$ \cite[Theorem 24.4]{Jam} and a result of Hemmer valid for $i=1$  \cite[Theorem 7.1.8]{Hem1} that was shown under the slightly stronger assumption $p^d>r$ using different methods.

The main tool in the proof of Theorem 1.1 are the characteristic free projective resolutions $B_{*}(\la) \to \Delta(\la) \to 0$ obtained by Santana and Yudin \cite{SY}. We show that, under the hypotheses of Theorem 1.1, the complexes $\Hom_S(B_{*}(\la), \D(\mu))$ and $\Hom_{S'}(B_{*}(\la^+), \D(\mu^+))$ are in fact isomorphic. For this we use computations with tableaux, properties of multinomial coefficients $\mod p$ and combinatorics of Schur algebras associated to the correspondence $\xi_\omega \mapsto \xi_{\omega^+}$ of basis elements,  where $\omega^+$ is obtained from the $n \times n$ matrix $\omega$ by adding $p^d$ to the entry in position $(1,1)$.

There are some very specific cases where projective resolutions of Weyl modules or initial segments thereof are known that are different from those of \cite{SY}. At least in two cases, these have fewer projective summands in each degree. Using these in place of \cite{SY}, we observe that the bound $p^d >r-\la_1$ in the statement of Theorem 1.1 may be improved for (a) $\Hom_{S}(\D(\la), \D(\mu))$, where $\la, \mu$ are as in Theorem 1.1 and (b) $\Ext_S^i(\D(h), \D(\mu)),$ where $h$ is a hook partition (see Section 6). 

The paper is organized as follows. In Section 2 we establish notation and recall basic facts on Weyl modules. In Section 3 we prove a crucial lemma on isomorphisms of weight subspaces of Weyl modules. Section 4 contains the combinatorial results needed for the proof of the main theorem which is given in subsections 4.4 and 4.6. In Section 5 we discuss two consequences of the main result for representations of the symmetric group. In Section 6 we observe that the bound $p^d>r-\la_1$ may be improved in two specific instances.

\section{Notation and recollections}
\subsection{Notation}

Throughout this paper, $K$ will be an infinite field of characteristic $p>0$. We fix positive integers $n$ and $r$. We will work mostly with homogeneous polynomial representations of $GL_n(K)$ of degree $r$, or equivalently, with finite dimensional modules over the Schur algebra $S=S_K(n,r)$. A standard reference here is \cite{Gr}.

Let $V=K^n$ be the natural $GL_n(K)$-module. The divided power algebra $DV=\sum_{i\geq 0}D_iV$ of $V$ is defined as the graded dual of the Hopf algebra $S(V^{*})$, where $V^{*}$ is the linear dual of $V$ and $S(V^{*})$ is the symmetric algebra of $V^{*}$, see \cite[I.4]{ABW}. 

We denote by $\Lambda(n;r)$ the set of sequences $\al=(\al_1, \dots, \al_n)$ of length $n$ of nonnegative integers that sum to $r$ and by $\Lambda^+(n,r)$ the subset of $\Lambda(n,r)$ consisting of sequences $\lambda=(\lambda_1, \dots, \lambda_n)$ such that $\lambda_1 \ge \lambda_2 \ge \dots \ge \lambda_n$. Elements of $\Lambda^+(n,r)$ are referred to as partitions of $r$ with at most $n$ parts. 


If $\al=(\al_1,\dots, \al_n) \in \Lambda(n,r)$, we denote by $D(\al)$ or $D(\al_1,\dots,\al_n)$ the tensor product $D_{\al_1}V\otimes \dots \otimes D_{\al_n}V$. All tensor products in this paper are over $K$.

For $\lambda \in \Lambda^+(n,r)$, we denote by $\Delta(\lambda)$ the corresponding Weyl module for $S$ and by ${\pi}_{\D(\la)} : D(\la) \to \D(\la)$ the natural projection.

Let $\La(n,n;r)$ and $\La(n,n,n;r)$ be the sets of functions $(\omega_{ij}):\mathbf{n} \times \mathbf{n} \to \mathbb{N}$ and $(\theta_{stq}):\mathbf{n} \times \mathbf{n} \times \mathbf{n} \to \mathbb{N}$ respectively, where $\mathbf{n}=\{1,2,\dots,n\}$ and $\mathbb{N} = \{0,1,2,\dots\},$ such that $\sum_{i,j=1}^{n}\omega_{ij}=r$ and $\sum_{s,q,t=1}^{n}\theta_{sqt}=r$. If $d$ is a positive integer and $\al=(\al_1,\dots,\al_n) \in \La(n;r),$ $\omega =(\omega_{st}) \in \La(n,n;r)$ and $\theta =(\theta_{stq}) \in \La(n,n,n;r)$, define $\al^+ \in \La(n;r')$, $\omega^+ \in \La(n,n;r')$ and $\theta^+ \in \La(n,n,n;r)$ by \begin{align*}&\al^+=(\al_1+p^d,\al_2,\dots,\al_n), \\ &\omega^+_{st}=\begin{cases}\omega_{11}+p^d& (s,t)=(1,1)\\\omega_{st}& (s,t)\neq (1,1),\end{cases} \\ &\theta^+_{stq}=\begin{cases}\theta_{111}+p^d& (s,t,q)=(1,1,1)\\\theta_{stq}& (s,t,q)\neq (1,1,1).\end{cases}\end{align*}

Next we recall some results need for the proof of Theorem 1.1.

\subsection{Semistandard basis of $\Delta(\mu)$} We will record here a fundamental fact from \cite{ABW}.
Consider the order $e_1<e_2<\dots<e_n$ on the natural basis $\{e_1,F,e_n\}$ of $V=K^n$. We will denote each element $e_i$ by its subscript $i$. For a partition $\mu=(\mu_1,\dots,\mu_n) \in \Lambda^+(n,r)$, a tableau of shape $\mu$ is a filling of the diagram of $\mu$ with entries from $\{1,\dots,n\}$. The set of tableaux of shape $\mu$ will be denoted by $\mathrm{Tab}(\mu)$. 

A tableau is called \textit{row semistandard} if the entries are weakly increasing across the rows from left to right.  A row semistandard tableau is called \textit{semistandard} if the entries are strictly increasing in the columns from top to bottom. (We should point out that the terminology used in \cite{ABW} is 'co-standard'). The set of semistandard tableaux of shape $\mu$ will be denoted by $\mathrm{SST}(\mu)$.

The \textit{weight} of a tableau $T$ is the tuple $\alpha=(\alpha_1,\dots,\alpha_n)$, where $\alpha_i$ is the number of appearances of the entry $i$ in $T$. The subset of $\mathrm{SST}(\mu)$ consisting of the semistandard tableaux of weight $\alpha$ will be denoted by $\mathrm{SST}_{\alpha}(\mu).$ 

For example, let $n=4$. The following tableau of shape $\mu=(6,4)$ 
\begin{center}
	$T=$
	\begin{ytableau}
		\ 1&1&1&2&2&4\\
		\ 1&2&3&4
	\end{ytableau}
\end{center}
is row semistandard but not semistandard because of the violation in the first column. It has weight $\alpha=(4,3,1,2)$. We will use 'exponential' notation for row semistandard tableaux. Thus for the previous example we write \[ T=\begin{matrix*}[l]
	1^{(3)}2^{(2)}4 \\
	1234\end{matrix*}. \]

To each row semistandard tableau $T$ of shape $\mu=(\mu_1, \dots,\mu_n)$ we may associate an element $x_T=x_T(1) \otimes \cdots \otimes x_T(n) \in D(\mu_1,\dots,\mu_n),$ where $x_T(j)=1^{(a_{1j})}\dots n^{(a_{nj})}$ and $a_{ij}$ is equal to the number of appearances of $i$ in the $j$-th row of $T$. For $T$ in the previous example we have $x_T=1^{(3)}2^{(2)}4\otimes1234$. According to \cite[Theorem II.2.16]{ABW} we have the following. \begin{theorem}[\cite{ABW}] The set $\{\pi_{\D(\mu)}(x_T): T \in \mathrm{SST}(\mu)\}$ is a basis of $\Delta({\mu})$.\end{theorem}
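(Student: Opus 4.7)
The plan is to prove the two standard assertions separately: that the set $\{\pi_{\Delta(\mu)}(x_T) : T \in \mathrm{SST}(\mu)\}$ spans $\Delta(\mu)$, and that it is linearly independent.

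For spanning, I would use the presentation of $\Delta(\mu)$ as the quotient of $D(\mu_1,\dots,\mu_n)$ by the image of the ``box-moving'' maps
\[
\square_i \colon D(\mu_1,\dots,\mu_i+t,\mu_{i+1}-t,\dots,\mu_n) \to D(\mu_1,\dots,\mu_n),
\]
built from iterated comultiplication in the divided-power coalgebra followed by multiplication; these are the characteristic-free analogue of Garnir relations. It suffices to treat row semistandard $T$, since commutativity of divided powers within a single row already lets one rewrite $x_T$ as $x_{T'}$ for row semistandard $T'$ inside $D(\mu)$. For a row semistandard $T$ with a column violation between rows $i$ and $i+1$, the relevant $\square_i$-relation expresses $\pi_{\Delta(\mu)}(x_T)$ as a linear combination of $\pi_{\Delta(\mu)}(x_{T'})$ where each $T'$ is obtained by pulling entries from row $i+1$ up into row $i$. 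Choosing a well-ordering on row semistandard tableaux that is strictly decreased by every such move---for instance the lexicographic order on column-reading words---lets one iterate the procedure until only semistandard images remain, giving spanning.

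For linear independence, I would pass to characteristic zero. The whole construction is defined over $\mathbb{Z}$, and a Gram-matrix argument pairing the $\pi_{\Delta(\mu)}(x_T)$ against the dual semistandard elements of the Schur module $K_\mu V^{\ast}$ shows, via unitriangularity in the same tableau order used for straightening, that the integral form $\Delta(\mu)_{\mathbb{Z}}$ is a free $\mathbb{Z}$-module with $\Delta(\mu)_K \cong \Delta(\mu)_{\mathbb{Z}} \otimes_{\mathbb{Z}} K$. Over $\mathbb{Q}$ classical $GL_n$ representation theory applies: the character of $\Delta(\mu)_{\mathbb{Q}}$ is the Schur polynomial $s_\mu(x_1,\dots,x_n)$, whose coefficient of $x^{\alpha}$ equals $|\mathrm{SST}_{\alpha}(\mu)|$. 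Hence each $\alpha$-weight space of $\Delta(\mu)_{\mathbb{Q}}$ has dimension $|\mathrm{SST}_{\alpha}(\mu)|$ and, combined with the spanning step, the semistandard images form a $\mathbb{Q}$-basis. Freeness of the integral form then transports the conclusion to arbitrary $K$.

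The main obstacle is the straightening step: one has to write the $\square_i$-relations explicitly in the divided-power language, identify each of their summands as a concrete tableau, and verify strict decrease in the chosen order so that the rewriting terminates. The pairing that establishes freeness of $\Delta(\mu)_{\mathbb{Z}}$ is the other delicate ingredient, since it must be compatible with the same tableau order. Once these technical points are handled, the characteristic-zero character count and the standard integral-specialization argument close the proof uniformly in the characteristic of $K$.
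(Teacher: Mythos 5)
The paper does not prove this statement at all: it is quoted verbatim from Akin--Buchsbaum--Weyman (Theorem II.2.16), so there is no internal proof to measure your attempt against, only the source's. Your outline is a correct and standard route to the semistandard basis theorem, and its first half coincides with the ABW argument: spanning via the presentation of $\Delta(\mu)$ as the cokernel of the box maps together with the straightening law (their Lemma II.2.15/II.3.15), with termination controlled by a suitable order on tableaux. Where you diverge is linear independence. ABW work over an arbitrary commutative ring and prove independence directly, by exhibiting a triangular (with unit diagonal) relationship between the images of the semistandard elements and a dual family --- essentially the first of the two mechanisms you mention, the unitriangular pairing with the Schur module $K_\mu V^{*}$. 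Your second mechanism --- base change from an integral form plus the character count $\dim \Delta(\mu)_{\mathbb{Q},\alpha}=|\mathrm{SST}_\alpha(\mu)|$ from the Schur polynomial --- also closes the argument (the cokernel presentation guarantees $\Delta(\mu)_{\mathbb{Z}}\otimes K\simeq\Delta(\mu)_K$, and a finitely generated abelian group generated by $N$ elements with rank $N$ is free), but it is logically redundant once the unitriangular pairing is established, and conversely, if you only want the characteristic-zero count you do not need the pairing. You should commit to one of the two. The genuinely delicate points are exactly the ones you flag: one must verify that every summand produced by a $\square_i$-relation strictly decreases the chosen order (not every natural order on column words works, and ABW choose theirs with care), and, if you use the pairing, that it is unitriangular in that same order. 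As a sketch the proposal is sound; as written it would need those verifications spelled out to be a proof.
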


If $U \in \mathrm{Tab}(\mu)$, we will denote the element $\pi_{\D(\mu)}(x_U) \in \Delta(\mu)$ by $[U].$ According to the previous theorem, there exist $c_T \in K$ such that \begin{equation}[U]=\sum_{T \in \mathrm{SST}(\mu)}c_T[T].\end{equation} From the proof of the previous theorem in \cite{ABW} (in particular, last line of p. 235 and the top of p. 236), it follows that each semistandard tableau $T$ appearing in the right hand side of eqn. (2.1) with nonzero coefficient is obtained by a rearrangement of the entries of $U$. Thus we have the following remark. \begin{remark}Each semistandard tableau $T$ appearing in the right hand side of eqn. (2.1) with nonzero coefficient has weight equal to the weight of $U$.\end{remark}

\subsection{Weight subspaces of $\Delta(\mu)$} Let $\al, \beta  \in \Lambda(n,r)$ and let $\omega =(\omega_{ij}) \in \La(n,n;r)$ such that $\sum_{j=1}^{n}{\omega_{ij}}=\al_i $ (for each $i$) and $\sum_{i=1}^{n}{\omega_{ij}}=\beta_j $ (for each $j$), where $\al= (\al_1,\dots, \al_n)$ and $\beta= (\beta_1,\dots, \beta_n)$. For each $i=1,\dots,n$ consider the indicated component \begin{align*}\Delta: D(\al_i) \to D(\omega_{i1},\omega_{i2},\dots,\omega_{in}),\end{align*} of the comultiplication map of the Hopf algebra $DV$. If $x \in D(\alpha_i)$, the image $\D(x) \in D(\omega_{i1},\omega_{i2},\dots,\omega_{in})$ is a sum of elements of the form $x_s(\omega_{i1},1) \otimes x_s(\omega_{i2},2) \otimes \cdots \otimes x_s(\omega_{in},n)$, where for each $i$ we have $x_s(\omega_{ij},j) \in D(\omega_{ij})$. By a slight abuse of notation we will write $x_s(\omega_{ij})$ in place of $x_s(\omega_{ij},j)$. Thus we will write \[\Delta(x) = \sum_{s}x_s(\omega_{i1}) \otimes x_s(\omega_{i2}) \otimes \cdots \otimes x_s(\omega_{in}).\]
\begin{definition}
	With the previous notation, define the map $ \phi_{\omega}:D(\al) \to D(\beta)$ that sends  $x_1 \otimes x_2 \otimes \dots\otimes x_n$ to \[\sum_{s_1,\dots,s_n}x_{1s_1}(\omega_{11}) \cdots x_{ns_n}(\omega_{n1}) \otimes  \cdots \otimes x_{1s_1}(\omega_{1n})\cdots x_{ns_n}(\omega_{nn}). \]
\end{definition}

Now suppose that $\beta = \mu$. For a row standard $T \in \taa$, \[T=\begin{matrix*}[l]
	1^{(a_{11})}2^{(a_{21})}\cdots n^{(a_{n1})}\\
	\cdots\\
	1^{(a_{1n})}2^{(a_{2n})}\cdots n^{(a_{nn})},\end{matrix*}\]
define $\omega(T)=(\al_{ij})$. The corresponding map $ \phi_{\omega(T)}:D(\al) \to D(\mu)$ will be denoted by $\phi_T$ for short.

Let us denote by $\D(\mu)_{\al}$ the weight subspace of the Weyl module $\Delta(\mu)$ corresponding to the weight $\al$. According to \cite[Section 2]{AB}, we have the following result.
\begin{proposition}[\cite{AB}] Let $\al \in \Lambda(n,r)$ and $\mu \in \Lambda^+(n,r)$. Then there is an isomorphism of vector spaces $\D(\mu)_{\al} \simeq \Hom_S(D(\al), \Delta(\mu))$ such that $[T] \mapsto \pi_{\D{(\mu)}} \circ \phi_T$ for all row semistandard $T \in \mathrm{Tab}_{\al}(\mu)$. Moreover, a basis of $\Hom_S(D(\al), \Delta(\mu))$ is the set $\{\pi_{\D{(\mu)}} \circ \phi_T: T \in \mathrm{SST}_{\al}(\mu)\}.$
\end{proposition}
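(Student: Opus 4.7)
The plan is to reduce the claim to the standard Frobenius-type identification of $\Hom_S(D(\al), M)$ with the weight subspace $M_\al$ for any polynomial $S$-module $M$ of degree $r$, and then to compute $\phi_T$ on one distinguished element to read off the explicit formula; the basis statement then drops out of Theorem 2.1.

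First I would establish, following \cite[Section 2]{AB} (compare also \cite{ABW}), the natural isomorphism
\[ \mathrm{ev}_{v_\al}\colon \Hom_S\bigl(D(\al),M\bigr) \xrightarrow{\ \sim\ } M_\al, \qquad f \mapsto f(v_\al),\]
where $v_\al := 1^{(\al_1)} \otimes 2^{(\al_2)} \otimes \cdots \otimes n^{(\al_n)} \in D(\al)$. The underlying point, a Schur-algebra version of Frobenius reciprocity, is that $D(\al)$ is projective and represents the weight-$\al$ functor on polynomial $GL_n$-representations of degree $r$. I would simply invoke this foundational fact rather than reprove it, since it is the serious input.

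Next I would compute $\phi_T(v_\al)$ directly for row semistandard $T \in \taa$ with $\omega(T)=(a_{ij})$. The component $\D\colon D(\al_i) \to D(a_{i1},\ldots,a_{in})$ of the coproduct of the Hopf algebra $DV$ sends $i^{(\al_i)}$ to the single tensor $i^{(a_{i1})} \otimes \cdots \otimes i^{(a_{in})}$, since this is the unique summand of $\D(i^{(\al_i)})$ living in the prescribed multi-degree and it carries no scalar. Plugging this into the definition of $\phi_{\omega(T)}$ gives
\[\phi_T(v_\al) \;=\; \bigl(1^{(a_{11})}\cdots n^{(a_{n1})}\bigr) \otimes \cdots \otimes \bigl(1^{(a_{1n})}\cdots n^{(a_{nn})}\bigr) \;=\; x_T,\]
so $(\pi_{\D(\m)} \circ \phi_T)(v_\al) = \pi_{\D(\m)}(x_T) = [T]$. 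Under $\mathrm{ev}_{v_\al}$ this is precisely the statement that $[T]$ corresponds to $\pi_{\D(\m)} \circ \phi_T$, establishing the explicit assignment on all row semistandard tableaux.

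The basis statement now follows immediately: by Theorem 2.1, $\{[T] : T \in \sta\}$ is a $K$-basis of $\D(\m)_\al$, and $\mathrm{ev}_{v_\al}^{-1}$ transports it to the asserted basis $\{\pi_{\D(\m)} \circ \phi_T : T \in \sta\}$ of $\Hom_S(D(\al), \D(\m))$. The main obstacle is the evaluation isomorphism of the second paragraph, which is the one non-routine ingredient and which I would cite directly from \cite{AB}; everything else is a tautological manipulation of the coproduct in $DV$ together with a formal application of Theorem 2.1.
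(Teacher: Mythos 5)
The paper does not prove Proposition 2.4; it cites \cite{AB}, and your proposal reconstructs exactly the expected argument behind that citation. The identification $\Hom_S(D(\al),M)\simeq M_\al$ by evaluation at $v_\al$ is the same fact the paper later uses via (4.8) (the isomorphism $D(\al)\simeq S\xi_\al$ of \cite[Theorem A.4]{SY} combined with $\Hom_S(S\xi_\al,M)\simeq \xi_\al M$), your computation that the coproduct component of $i^{(\al_i)}$ in $D(a_{i1},\ldots,a_{in})$ is the single term $i^{(a_{i1})}\otimes\cdots\otimes i^{(a_{in})}$ is correct (since $i^{(\al_i)}$ is a divided power of one basis vector, the indicated component has a unique summand with coefficient $1$), and so $\phi_T(v_\al)=x_T$ and the basis statement follows from Theorem 2.1 exactly as you say. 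The proposal is correct and essentially the canonical proof.
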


\section{Isomorphism of weight spaces}
If $T \in \mathrm{Tab}_{\al}(\mu)$, let $T^+ \in \mathrm{Tab}_{\al^+}(\mu^+)$ be obtained from $T$ by inserting $p^d$ 1's in the beginning of the top row.

From Proposition 2.4 we have the basis elements $\pi_{\D{(\mu)}} \circ \phi_T$ of the vector space $ \Hom_S(D(\al),\Delta(\mu))$, where $T \in \sta$ and likewise we have the basis elements $\pi_{\D(\mu^+)} \circ \phi_{X}$ of $ \Hom_{S'}(D(\al^+),\Delta(\mu^+))$, where $X \in \STa.$
\begin{lemma}Let $\al \in \Lambda(n,r)$ and $\mu \in \Lambda^+(n,r)$ such that $\mu_2 \le \al_1$. Then the maps \begin{align*}&\sta \to \STa, T \mapsto T^+\\ &\STa \to \sta, T^+ \mapsto T\end{align*}
	are inverses of each other and bijections. Hence the map  \begin{align*}\Hom_S(D(\al), \Delta(\mu)) &\to \Hom_{S'}(D(\al^+), \Delta(\mu^+)),\\ \sum_{T \in \sta}c_T\pi_{\D(\mu)} \circ \phi_T &\mapsto \sum_{T \in \sta}c_T \pi_{\D(\mu^+)} \circ\phi_{T^+},\end{align*}
	is an isomorphism.
\end{lemma}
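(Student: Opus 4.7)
The plan is to reduce the Hom-space isomorphism to a bijection of the indexing sets $\sta \leftrightarrow \STa$: once this is established, Proposition 2.4 yields the vector space isomorphism directly via $\pi_{\D(\mu)}\circ\phi_T \mapsto \pi_{\D(\mu^+)}\circ\phi_{T^+}$, because both sides then have bases indexed by sets in natural bijection. So the entire content of the lemma is combinatorial: one must verify that $T\mapsto T^+$ and its candidate inverse (delete the first $p^d$ entries of the top row) are well-defined on the semistandard sets, preserve weight, and are mutually inverse.

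For the forward direction, let $T \in \sta$. The shape and weight of $T^+$ are evidently $\mu^+$ and $\al^+$, and rows of $T^+$ remain weakly increasing since we prepend $1$'s, the minimum entry, to the top row. The column strict-increase condition needs to be checked only at columns of $T^+$ that contain an entry from row $2$ of $T$, i.e.\ columns $1,\dots,\mu_2$. For $j \le p^d$ this is immediate: the top entry of column $j$ is $1$ and $T_{2,j}\ge 2$. The only substantive case is $p^d < j \le \mu_2$, which can arise only when $p^d<\mu_2$; here the top entry of column $j$ in $T^+$ is the shifted entry $T_{1,j-p^d}$. I would invoke the hypothesis $\mu_2\le\al_1$ at this point: since all $1$'s in a semistandard tableau occupy the leftmost positions of row $1$, the inequality $j-p^d \le \mu_2 \le \al_1$ forces $T_{1,j-p^d}=1$, whereas $T_{2,j}\ge 2$.

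For the backward direction, let $S\in\STa$. Since $S$ has $\al_1+p^d\ge p^d$ entries equal to $1$, all in row $1$, deleting the first $p^d$ of them gives a tableau of shape $\mu$ and weight $\al$ whose rows are clearly weakly increasing. The only column condition that requires verification is $S_{1,j+p^d} < S_{2,j}$ for $j=1,\dots,\mu_2$; here $j+p^d\le\mu_2+p^d\le\al_1+p^d=\al_1^+$ forces $S_{1,j+p^d}=1$, while $S_{2,j}\ge 2$.

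The two maps are manifestly mutually inverse. I expect the crux to be the single observation above: the hypothesis $\mu_2\le\al_1$ is exactly what ensures that, in both directions, the entries shifted across the boundary between the "old" and "new" top-row positions are all $1$'s, so that column strictness is preserved. Everything else is bookkeeping, and the Hom-space statement is then a formal consequence of Proposition 2.4.
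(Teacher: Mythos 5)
Your argument is correct and follows essentially the same route as the paper: verify that $T\mapsto T^+$ and its inverse preserve semistandardness, then invoke Proposition 2.4. One small observation: you invoke the hypothesis $\mu_2\le\al_1$ in the forward direction as well, but it is not needed there --- for $p^d<j\le\mu_2$ one has $T^+_{1,j}=T_{1,j-p^d}\le T_{1,j}<T_{2,j}$ directly from weak increase along row $1$ and semistandardness of $T$ (this is why the paper dismisses that direction as clear from the definition); the hypothesis is genuinely used only in the backward direction, exactly as your analysis there shows.
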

\begin{proof} From the definition of semistandard tableau, it is clear that if $T \in \sta$, then $T^+ \in \STa.$ It is also clear that the first map of the lemma is injective.
	
	If $X \in \STa$, then there are no 1's below the first row. Hence the first row of $X$ contains $\al_1+p^d$ 1's. Let $T \in \mathrm{Tab}_{\al}(\mu)$ be the tableau obtained from $X$ by deleting $p^d$ 1's. Since $\mu_2 \le \al_1$, we have that $T$ is semistandard, i.e. $T \in \sta$. It is clear that $T^{+}=X$. \end{proof}

\subsection{Straightening law for two rows}
First we record from \cite[Lemma 4.2]{MS3}, the following that is a particular case of the straightening law for Weyl modules with two parts.
\begin{lemma}Let $\mu=(\mu_1, \mu_2)$ be a partition consisting of two parts and consider an element $[T]=\begin{bmatrix*}
		1^{(a_1)}   \cdots   n^{(a_n)} \\
		1^{(b_1)}  \cdots  n^{(b_n)}
	\end{bmatrix*} \in \Delta(\mu).$ Then we have the following. \begin{enumerate} \item If $a_1+b_1 > \mu_1$, then $[T]=0$. \item If $a_1+b_1 \le \mu_1$, then \begin{align*}
			[T]=(-1)^{b_1}\sum_{i_2,\dots,i_n} \tbinom{b_2+i_2}{b_2} \cdots \tbinom{b_n+i_n}{b_n}\begin{bmatrix*}[l]
				1^{(a_1+b_1)}  2^{(a_2-i_2)}  \cdots   n^{(a_n-i_n)} \\
				\noindent 2^{(b_2+i_2)} \cdots n^{(b_n+i_n)}
			\end{bmatrix*},\end{align*} where the sum ranges over all nonnegative integers $i_2, \dots, i_n$ such that $i_2+\cdots+i_n=b_1$ and $i_s \le a_s$ for all $s=2,\dots,n$. \end{enumerate}
\end{lemma}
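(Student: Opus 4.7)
The plan is to handle part (1) as an immediate corollary of the semistandard basis theorem together with the weight-preservation remark, and to establish part (2) by a direct calculation with the ``box'' straightening relation arising from the Akin-Buchsbaum-Weyman (ABW) presentation of $\Delta(\mu_1,\mu_2)$.

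For part (1), the weight of $T$ is $\alpha = (a_1+b_1, a_2+b_2, \ldots, a_n+b_n)$. By Theorem 2.1 and Remark 2.2, $[T]$ lies in the $K$-span of $\{[S] : S \in \mathrm{SST}_\alpha(\mu)\}$. But if $a_1 + b_1 > \mu_1$, then no semistandard $S$ of shape $(\mu_1,\mu_2)$ can have weight $\alpha$: in a semistandard filling of a two-row diagram every entry equal to $1$ must sit in the top row, which has only $\mu_1 < a_1 + b_1$ boxes. Hence $\mathrm{SST}_\alpha(\mu) = \emptyset$ and $[T] = 0$.

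For part (2), I would use the basic straightening relation for the two-row Weyl module, coming from the presentation of $\Delta(\mu_1,\mu_2)$ as a quotient of $D_{\mu_1}V \otimes D_{\mu_2}V$ modulo the image of an ABW ``box'' map built out of a comultiplication $D_{\mu_1+s}V \to D_{\mu_1}V \otimes D_s V$ followed by a multiplication $D_s V \otimes D_{\mu_2-s}V \to D_{\mu_2}V$. Specializing to $s = b_1$ and expanding by means of the divided-power identities $j^{(a)} \cdot j^{(b)} = \tbinom{a+b}{a}\, j^{(a+b)}$ and $\Delta(j^{(c)}) = \sum_k j^{(k)} \otimes j^{(c-k)}$ produces a finite identity in $\Delta(\mu)$ linking $[T]$ to the semistandard tableaux indexed by tuples $(i_2,\ldots,i_n)$ satisfying $i_2 + \cdots + i_n = b_1$ and $i_s \le a_s$. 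Solving this identity for $[T]$ and re-collecting the coproduct coefficients into the products $\prod_j \tbinom{b_j+i_j}{b_j}$ yields the claimed formula, with the sign $(-1)^{b_1}$ accumulating from transferring $b_1$ boxes across the box relation.

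I expect the main obstacle to be the combinatorial bookkeeping in part (2): confirming that the iterated coproducts and products in $DV$ collect precisely into the product $\prod_j \tbinom{b_j+i_j}{b_j}$, and pinning down the overall sign. Part~(1), by contrast, is an essentially one-line consequence of the material in Section~2.
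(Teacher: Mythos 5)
The paper does not actually prove this lemma: it is quoted verbatim from \cite[Lemma 4.2]{MS3}, so there is no in-paper argument to compare against. Your part (1) is complete and correct as written: in a semistandard two-row tableau every entry of the second row exceeds the entry above it, so all $1$'s lie in the first row, whence $\mathrm{SST}_{\alpha}(\mu)=\emptyset$ when $\alpha_1=a_1+b_1>\mu_1$, and Theorem 2.1 together with Remark 2.2 forces $[T]=0$.

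For part (2) you have identified the right tool (the ABW box relation presenting $\Delta(\mu_1,\mu_2)$ as a quotient of $D(\mu_1)\otimes D(\mu_2)$), but the step ``solving this identity for $[T]$'' hides a genuine gap. Applying the relation with $s=b_1$ to $x=1^{(a_1+b_1)}2^{(a_2)}\cdots n^{(a_n)}\in D_{\mu_1+b_1}V$ and $y=2^{(b_2)}\cdots n^{(b_n)}\in D_{\mu_2-b_1}V$ gives
\begin{equation*}
\sum_{i_1+\cdots+i_n=b_1}\ \prod_{j\ge 2}\tbinom{b_j+i_j}{i_j}\,
\begin{bmatrix*}[l]1^{(a_1+b_1-i_1)}2^{(a_2-i_2)}\cdots n^{(a_n-i_n)}\\ 1^{(i_1)}2^{(b_2+i_2)}\cdots n^{(b_n+i_n)}\end{bmatrix*}=0 ,
\end{equation*}
in which the term $i_1=b_1$ is $[T]$ and the terms with $i_1=0$ are exactly the claimed right-hand side; but there are also cross terms with $0<i_1<b_1$, i.e.\ tableaux that still have $1$'s in the second row. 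So a single application of the relation does not isolate $[T]$: you must induct on $b_1$, apply the already-established case to each cross term, and then verify that the resulting iterated binomial coefficients recombine (via a Vandermonde-type identity) into $\prod_j\binom{b_j+i_j}{b_j}$ with total sign $(-1)^{b_1}$. That recursion and coefficient check is the actual content of the lemma; as a sanity check, for $\mu=(3,2)$ and $[T]=\bigl[\begin{smallmatrix}1\,2^{(2)}\\ 1^{(2)}\end{smallmatrix}\bigr]$ one needs both the $b_1=2$ relation and the $b_1=1$ case to land on $[T]=\bigl[\begin{smallmatrix}1^{(3)}\\ 2^{(2)}\end{smallmatrix}\bigr]$. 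You flag the bookkeeping as the main obstacle, correctly, but the inductive structure itself needs to be part of the proof, not deferred.
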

The point of the above lemma is that in case (2), the coefficients that appear do not depend on $a_1$.
\subsection{A crucial lemma} We want to show that the map in the last statement  of Lemma 3.1 may be defined in a similar way without the assumption that the tableaux $T$ are semistandard. Thus we need to show compatibility with the straightening law. We will use ideas from Lemma II.2.15 of \cite{ABW} which is stated and proven for costandard modules but the main steps are valid for Weyl modules also, see Lemma II.3.15 loc.cit.
\begin{lemma} Let $\al \in \Lambda(n,r)$ and $\mu \in \Lambda^+(n,r)$ such that $\mu_2 \le \al_1$. \begin{enumerate}\item Let $U \in \taa$ and write \[
		[U]=\sum_{T \in \mathrm{SST}(\mu)}c_T[T], \;\;
		[U^+]=\sum_{T \in \mathrm{SST}(\mu)}c_{T^+}[T^+], 
		\] in $\Delta(\mu)$ and $\Delta(\mu^+)$ respectively, where $c_T, c_{T^+} \in K$. Then $c_T=c_{T^+}$ for all $T \in \mathrm{SST}(\mu)$.
		\item The map \[\Hom_S(D(\al), \Delta(\mu)) \to \Hom_{S'}(D(\al^+), \; \Delta(\mu^+)), \pi_{\D(\mu)} \circ \phi_U \mapsto \pi_{\D(\mu^+)} \circ\phi_{U^+},\]
		where $U \in \taa$ is row standard, is a well defined isomorphism.
	\end{enumerate}
\end{lemma}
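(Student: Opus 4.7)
The plan is to prove (1) by induction using a two-row straightening procedure, and then obtain (2) as an immediate consequence of (1) together with Lemma 3.1. For (1) I would induct on a well-founded ordering on tableaux of shape $\mu$ that strictly decreases with each two-row straightening move (for instance the reverse lexicographic order on column words used in \cite[Lemma II.3.15]{ABW}). In the base case $U$ is already semistandard; then the hypothesis $\mu_2 \le \al_1$ ensures that $U^+$ is also semistandard (as in the argument preceding Lemma 3.1), so $c_U = c_{U^+} = 1$ and all other coefficients vanish, and the claim holds trivially.

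For the inductive step, pick a column violation of $U$ in two adjacent rows $i,i+1$. The standard device of restricting to rows $i, i+1$ and treating the remaining rows as inert tensor factors reduces the straightening to a two-row Weyl module computation, yielding a local relation $[U] = \sum_k d_k [V_k]$ in $\D(\mu)$, where each $V_k$ is strictly smaller than $U$ in the chosen order. If $i \ge 2$, the top row plays no role in the relation, so it lifts verbatim to $\D(\mu^+)$ with the extra $p^d$ ones of row $1$ as spectators, giving $[U^+] = \sum_k d_k [V_k^+]$. If $i = 1$, Lemma 3.3(2) is exactly the ingredient needed: its straightening coefficients $d_k$ do not depend on $a_1$, the number of $1$'s in the top row. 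Hence the same relation with $a_1$ replaced by $a_1 + p^d$ is valid in $\D(\mu^+)$, and one again gets $[U^+] = \sum_k d_k [V_k^+]$ with the same $d_k$. Applying the inductive hypothesis to each $V_k$ then yields $c_T = c_{T^+}$ for every $T \in \mathrm{SST}(\mu)$.

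For (2), assume $U \in \taa$ is row semistandard. Theorem 2.1 and Remark 2.2 give $[U] = \sum_{T \in \sta} c_T [T]$, and part (1) gives $[U^+] = \sum_{T \in \sta} c_T [T^+]$. By Lemma 3.1 the rule $T \mapsto T^+$ is a bijection $\sta \to \STa$, so under the identification of Proposition 2.4 it extends linearly to an isomorphism $\Hom_S(D(\al),\D(\mu)) \to \Hom_{S'}(D(\al^+),\D(\mu^+))$ that sends $\pi_{\D(\mu)} \circ \phi_T$ to $\pi_{\D(\mu^+)} \circ \phi_{T^+}$ for each semistandard $T$. The expansions just displayed show that this isomorphism restricts on row semistandard $U$ to the rule $\pi_{\D(\mu)} \circ \phi_U \mapsto \pi_{\D(\mu^+)} \circ \phi_{U^+}$, so the latter is well defined and is the desired isomorphism. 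The main obstacle I anticipate is the technical justification that a two-row straightening move can be performed locally inside $\Delta(\mu)$ for $\mu$ with arbitrarily many parts, so that Lemma 3.3 is genuinely applicable with the remaining rows held fixed as tensor factors; once this locality and the $a_1$-independence built into Lemma 3.3(2) are in hand, the rest of the argument is bookkeeping.
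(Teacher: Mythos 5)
Your overall strategy is the right one and matches the paper's in spirit: everything hinges on the $a_1$-independence of the two-row straightening coefficients (Lemma 3.2(2) --- note you cite it as ``Lemma 3.3(2)'', which is the statement being proved) together with the hypothesis $\mu_2\le\al_1$, and your part (2) is exactly the paper's deduction from part (1) and Lemma 3.1. However, your inductive step for $i=1$ has a genuine gap. Lemma 3.2(2) is not a general straightening relation for rows $1$ and $2$: it is only the specific relation that clears all $1$'s out of row $2$ (and its output tableaux need not be semistandard in those two rows). A column violation between rows $1$ and $2$ that does not involve $1$'s --- say row $1$ is $1^{(a_1)}3^{(c)}\cdots$ and row $2$ is $2^{(b_2)}3^{(b_3)}\cdots$ with a clash at an entry $\ge 2$ --- is simply not covered by the lemma, so the claim that the relevant relation ``with $a_1$ replaced by $a_1+p^d$'' has the same coefficients is unsupported at exactly the step where it matters. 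Proving $a_1$-independence for arbitrary Garnir-type relations between rows $1$ and $2$ is essentially a new lemma you would have to supply.

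The paper avoids ever needing such a general relation by ordering the reductions carefully: first straighten rows $2,\dots,n$ (the first row is a spectator, so the coefficients are literally the same for $U$ and $U^+$); then use Lemma 3.2 only to move the $1$'s of row $2$ up into row $1$; then straighten rows $2,\dots,n$ again, which by Remark 2.2 now involves no $1$'s; finally observe that because row $1$ begins with at least $\al_1\ge\mu_2$ ones and rows $2,\dots,n$ are semistandard with all entries $\ge 2$, the reattached tableaux are automatically semistandard --- no straightening between rows $1$ and $2$ beyond the $1$-clearing step is ever required. If you restructure your induction along these lines (or separately prove $a_1$-independence for all row-$1$/row-$2$ straightening relations), your argument goes through; as written, the $i=1$ case does not. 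Your other flagged worry --- performing two-row moves locally inside $\Delta(\mu)$ with the remaining rows as inert tensor factors --- is legitimate but standard, and the paper handles it by the attach-the-first-row device from the proof of Lemma II.2.15 of \cite{ABW}.
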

\begin{proof} First some notation. If $\nu =(\nu_1,\dots,\nu_n)$ is a partition and $T \in \mathrm{Tab}(\nu)$ is a tableau, let $\overline{\nu}=(\nu_2,\dots,\nu_n)$  be the partition obtained from $\nu$ by deleting the first part and let $\overline{T}$ be the tableau obtained from $T$ by deleting the first row. Note that $\overline{\nu}=\overline{\nu^{+}}$ and $\overline{T}=\overline{T^{+}}$. 
	
	(1) Let $U \in \taa$. We proceed with a number of reductions.
	
	Step 1. We may assume that the tableau $\overline{U}=\overline{U^+}$ is semistandard. Indeed, By Theorem 2.1 there exist semistandard tableaux $Y \in \mathrm{SST}(\overline{\mu})$ and coefficients $c_{Y} \in K$ such that \begin{equation}\label{str1} [\overline{U}] =[\overline{U^+}]=\sum c_{Y}[Y].\end{equation}  By attaching the first row of $U$ to $\overline{U}$ and each $Y$ and likewise by attaching the first row of  $U^+$ to $\overline{U^+}$ and each $Y$, we obtain (as in the last paragraph of the proof of Lemma II.2.15 of \cite{ABW}) \begin{equation}\label{str2}
		[U]=\sum c_{Y}[Y'] \; \text{and} \; 
		[U^+]=\sum c_{Y}[(Y')^+].
	\end{equation}
	Here we have that the tableaux $\overline{Y'}=\overline{(Y')^+}$ are semistandard.
	
	Step 2. We may assume that the tableau $\overline{U}=\overline{U^+}$ contains at least one 1. Indeed, by Step 1, we may assume that the tableau $\overline{U}=\overline{U^+}$ is semistandard. If it contains no 1, then the tableaux $U$ and $U^+$, after row-standardizing the first row of each, are semistandard because the number of 1's in the first row of $U$ and the first row of $U^+$ is equal to  $\al_1$ and $\al_1 +p^d$ respectively and we have $\al_1 \ge \mu_2$. In this case the conclusion of part (1) of the lemma is clear.
	
	Step 3. We apply Lemma 3.2 to the first two rows of $U$ and to the first two rows of $U^+$ in order to express each as a linear combination of tableaux that have all 1's (if any) in the first row. Since the coefficients appearing in the right hand side of the equation in part (2) of that Lemma do not depend on the number of 1's, we may assume that all the 1's of the tableau $U$ are located in the first row.
	
	Step 4. We repeat Step 1 to the tableau  $\overline{U}=\overline{U^+}$. By the previous step and Remark 2.2, this tableau has no 1's. Thus the $Y$ in the right hand side of eqn. (\ref{str1}) are semistandard and have no 1's. From this and the fact that $\al_1 \ge \mu_2$ we conclude that the $Y'$ and $(Y')^+ $ appearing in the right hand sides of equations (\ref{str2}) are semistandard.

	Since the sets $\{[T] \in \Delta(\mu): T \in \mathrm{SST}(\mu)\}$ and $\{[T^+] \in \Delta(\mu^+): T \in \mathrm{SST}(\mu)\}$  in part (1) of the lemma are linearly independent, the result follows.
	
	(2) This follows from (1) of the lemma and Lemma 3.1.
\end{proof}
An immediate consequence of part (2) of the previous lemma and Proposition 2.4 is the following.
\begin{corollary}\label{cor} Let $\al \in \Lambda(n,r)$ and $\mu \in \Lambda^+(n,r)$ such that $\mu_2 \le \al_1$. Let $c_i \in K$ and $U_i \in \mathrm{Tab}_{\al}(\mu)$, for $i=1,2,\dots,s.$ Then
	$\sum_{i}c_i[U_i] =0 $ in $\D(\mu)$ if and only if $\sum_{i}c_i[U^+] =0 $ in $\D(\mu^+)$.
\end{corollary}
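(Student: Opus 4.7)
The plan is to assemble the corollary directly from Proposition 2.4 and part (2) of Lemma 3.3 as the "one-and-only" statement about linear dependencies: the isomorphism of Hom spaces in Lemma 3.3(2) provides, after transport through Proposition 2.4, an isomorphism $\Delta(\mu)_{\al}\to\Delta(\mu^{+})_{\al^{+}}$ sending $[U_i]$ to $[U_i^{+}]$. Once this isomorphism is in place, the biconditional is immediate, since an isomorphism of vector spaces preserves and reflects linear dependence relations.

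The first step I would carry out is a reduction to row standard tableaux. A general $U\in\mathrm{Tab}_{\al}(\mu)$ need not be row semistandard, but the element $x_U\in D(\mu)$ depends only on the multiset of entries in each row of $U$ (because each factor $x_U(j)=1^{(a_{1j})}\cdots n^{(a_{nj})}$ is built from commuting divided powers). Consequently, if $U'$ denotes the tableau obtained from $U$ by rearranging the entries in each row in weakly increasing order, then $x_{U'}=x_U$ and therefore $[U']=[U]$ in $\Delta(\mu)$; analogously $[(U')^{+}]=[U^{+}]$ in $\Delta(\mu^{+})$. So without loss of generality each $U_i$ may be taken to be row standard.

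Second, I invoke Proposition 2.4 twice. It tells us that $[U_i]\in\Delta(\mu)_{\al}$ corresponds to $\pi_{\Delta(\mu)}\circ\phi_{U_i}\in\Hom_S(D(\al),\Delta(\mu))$ under the vector-space isomorphism $\Delta(\mu)_{\al}\simeq\Hom_S(D(\al),\Delta(\mu))$, and likewise $[U_i^{+}]\in\Delta(\mu^{+})_{\al^{+}}$ corresponds to $\pi_{\Delta(\mu^{+})}\circ\phi_{U_i^{+}}\in\Hom_{S'}(D(\al^{+}),\Delta(\mu^{+}))$. Since $\mu_2\le\al_1$ by hypothesis, Lemma 3.3(2) applies and yields the isomorphism
\[
\Hom_S(D(\al),\Delta(\mu))\xrightarrow{\sim}\Hom_{S'}(D(\al^{+}),\Delta(\mu^{+})),\quad \pi_{\Delta(\mu)}\circ\phi_{U}\mapsto\pi_{\Delta(\mu^{+})}\circ\phi_{U^{+}},
\]
for row standard $U\in\mathrm{Tab}_{\al}(\mu)$.

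Finally, stringing these three isomorphisms together produces a linear isomorphism $\Delta(\mu)_{\al}\to\Delta(\mu^{+})_{\al^{+}}$ that carries $[U_i]$ to $[U_i^{+}]$ for every row standard $U_i$. Since $[U_i]$ lies in the weight subspace $\Delta(\mu)_{\al}$, the relation $\sum_i c_i[U_i]=0$ in $\Delta(\mu)$ is equivalent to the same relation in $\Delta(\mu)_{\al}$, and the isomorphism transforms it to $\sum_i c_i[U_i^{+}]=0$ in $\Delta(\mu^{+})_{\al^{+}}\subseteq\Delta(\mu^{+})$, and conversely. There is no real obstacle here once Lemma 3.3 is granted; the only bookkeeping step that is easy to overlook is the reduction to row standard $U_i$, so I would make that reduction explicit at the outset to ensure Proposition 2.4 and Lemma 3.3(2) are applicable to every $U_i$ in the given linear combination.
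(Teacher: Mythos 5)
Your proof is correct and follows exactly the route the paper intends: the corollary is stated there as an immediate consequence of Lemma 3.3(2) and Proposition 2.4, i.e.\ transporting the linear relation through the composite isomorphism $\Delta(\mu)_{\al}\simeq\Hom_S(D(\al),\Delta(\mu))\simeq\Hom_{S'}(D(\al^{+}),\Delta(\mu^{+}))\simeq\Delta(\mu^{+})_{\al^{+}}$, which carries $[U]$ to $[U^{+}]$. Your explicit preliminary reduction to row standard tableaux (using $x_{U'}=x_U$ and the compatibility of row-standardization with the $+$ operation) is a detail the paper leaves implicit, and it is handled correctly.
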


We note that the map of Lemma 3.3(2) may not be well defined if the assumption $\mu_2 \le \al_1$ is relaxed. For example, let $\al=(\mu_1-1,\mu_1+1)$, where $\mu=(\mu_1, \mu_1)$. Then $[U]=0$, but $[U^+] \neq 0$  for all $d>0$, where  $[U]=\begin{bmatrix*}
	1^{(\mu_1-1)}2 \\
	2^{(\mu_1)}  
\end{bmatrix*} $

\section{Periodicity in Ext}
\subsection{Binomial coefficients mod p} If $a, a_1,\dots,a_s$ are nonnegative integers such that $a=a_1+\dots+a_s$, we denote the multinomial coefficient $\frac{a!}{a_1!a_2! \cdots a_s!}$ by $\tbinom{a}{a_1, a_2, \dots, a_s}$. Then \begin{equation}\tbinom{a}{a_1, a_2, \dots, a_s}=\tbinom{a}{a_1}\tbinom{a-a_1}{a_2} \cdots \tbinom{a-a_1 -\cdots -a_{s-1}}{a_s}.\label{multinomial} \end{equation} We will need the following well known property of multinomial coefficients.
\begin{lemma}\label{binom}
	Let $p$ be a prime. \begin{enumerate} \item Let $a,b$ nonnegative integers. If $d$ is a positive integer such that $p^{d}>b$, then $\binom{a+p^{d}}{b} \equiv \binom{a}{b} \mod p.$ \item let $a, a_1,\dots,a_s$ be nonnegative integers such that $a=a_1+\dots+a_s$. If $d$ is a positive integer such that $p^d > a-a_1$, then $\binom{a+p^d}{a_1+p^d,a_2,\dots,a_s} \equiv  \binom{a}{a_1,a_2,\dots,a_s} \mod p.$\end{enumerate}
\end{lemma}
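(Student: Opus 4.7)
The plan is to prove part (1) via the Frobenius identity on polynomials mod $p$, and then to reduce part (2) to part (1) using the factorization (\ref{multinomial}).

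For part (1), I would work in $\mathbb{F}_p[x]$. The Frobenius map gives $(1+x)^{p^d}\equiv 1+x^{p^d}\pmod p$, so
\begin{equation*}
(1+x)^{a+p^d} \;=\; (1+x)^{a}(1+x)^{p^d} \;\equiv\; (1+x)^{a}\bigl(1+x^{p^d}\bigr) \pmod p.
\end{equation*}
Comparing coefficients of $x^{b}$ on both sides yields
\begin{equation*}
\binom{a+p^d}{b} \;\equiv\; \binom{a}{b}+\binom{a}{b-p^d} \pmod p.
\end{equation*}
The hypothesis $b<p^{d}$ forces $b-p^{d}<0$, so the second binomial coefficient is zero and the claim follows.

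For part (2), I would rewrite both multinomial coefficients using the product formula (\ref{multinomial}). Doing this, the factors beyond the first are the same on both sides, namely $\binom{a-a_1}{a_2}\binom{a-a_1-a_2}{a_3}\cdots\binom{a-a_1-\cdots-a_{s-1}}{a_s}$. Hence the problem reduces to showing
\begin{equation*}
\binom{a+p^d}{a_1+p^d} \;\equiv\; \binom{a}{a_1} \pmod p.
\end{equation*}
Using the symmetry $\binom{a+p^d}{a_1+p^d}=\binom{a+p^d}{a-a_1}$, this becomes exactly the statement of part (1) applied with the roles of the parameters $a,b$ replaced by $a,a-a_1$, where the hypothesis $p^d>a-a_1$ is precisely what is needed to invoke part (1). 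Since $\binom{a}{a-a_1}=\binom{a}{a_1}$, this completes the reduction.

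The proof is essentially routine with no substantive obstacle; the only care needed is to align the hypothesis $p^d>a-a_1$ in part (2) with the hypothesis $p^d>b$ in part (1), which is why the symmetric form $\binom{a+p^d}{a-a_1}$ is used rather than the original $\binom{a+p^d}{a_1+p^d}$.
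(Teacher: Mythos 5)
Your proof is correct and follows essentially the same route as the paper: for part (2) you factor the multinomial coefficient via the product formula, rewrite the first factor by symmetry as $\binom{a+p^d}{a-a_1}$, and invoke part (1) with $b=a-a_1$, exactly as the paper does. For part (1) the paper merely cites Lemma 22.5 of James's book, while you supply the standard self-contained generating-function argument using $(1+x)^{p^d}\equiv 1+x^{p^d}\pmod p$; this establishes the same fact by an elementary and equivalent means.
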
 \begin{proof} (1) This follows, for example, from Lemma 22.5 of \cite{Jam}.
	
	(2) Using (\ref{multinomial}) and part (1) of the lemma, we have \begin{align*}\tbinom{a+p^d}{a_1+p^d,a_2,\dots,a_s} &= \tbinom{a+p^d}{a_1+p^d} \tbinom{a-a_1}{a_2} \cdots \tbinom{a-a_1-\dots-a_{s-1}}{a_{s}} \\&=\tbinom{a+p^d}{a-a_1} \tbinom{a-a_1}{a_2} \cdots \tbinom{a-a_1-\dots-a_{s-1}}{a_{s}} \\& \equiv \tbinom{a}{a-a_1} \tbinom{a-a_1}{a_2} \cdots\tbinom{a-a_1-\dots-a_{s-1}}{a_{s}} \\& \equiv \tbinom{a}{a_1,a_2,\dots,a_s} \mod p. \end{align*}    \end{proof}

\subsection{The resolutions of Santana and Yudin} We define here the projective resolution of $\D(\la)$ obtained by Santana and Yudin \cite{SY}. Their construction is valid over any commutative ring $R$ with identity but we will continue to  assume here that $R=K$ is an infinite field of characteristic $p>0$. 

First we fix some notation. Recall the dominance partial ordering on $\La(n;r)$. For $\alpha, \beta \in  \La(n;r)$ we write $\alpha \trianglerighteq \beta$ if $\sum_{s=1}^{t}\alpha_s \ge \sum_{s=1}^{t}\beta_s$ for all $t$. If $\alpha \trianglerighteq \beta$ and $\alpha \neq \beta$ we write $\alpha \triangleright \beta$.

If $\omega =(\omega_{st}) \in \La(n,n;r)$ and $\theta =(\theta_{stq}) \in \La(n,n,n;r)$, let $\omega^1, \omega^2 \in \La(n;r)$ and  $\theta^1, \theta^2, \theta^3 \in \La(n,n;r)$ be defined by 
\begin{align*}
	&(\omega^1)_t=\sum_{s=1}^{n}\omega_{st}, \;	(\omega^2)_s=\sum_{t=1}^{n}\omega_{st} \\
	&(\theta^1)_{tq}=\sum_{s=1}^{n}\theta_{stq}, \;	(\theta^2)_{sq}=\sum_{t=1}^{n}\theta_{stq}, \; (\theta^3)_{st}=\sum_{q=1}^{n}\theta_{stq}.
\end{align*}
For $i,j \in I(n,r)$, let us denote the element $\xi_{i,j}$ of the Schur algebra $S=S_K(n,r)$ defined in 
\cite[Section 2.3]{Gr} by $\xi_{\omega}$, where $\omega =(\omega_{st}) \in \La(n,n;r)$ is defined by \[\omega_{st} = |\{q \in \{1,2,\dots,r\}: (i_q,j_q)=(s,t)\}|.\]  Also, if $\nu=(\nu_1,\dots,\nu_n) \in \La(n,r)$, we denote by $\xi_\nu$ the element $\xi_\omega$, where $\omega =diag(\nu_1,\dots,\nu_n)$ is the indicated diagonal matrix. 

Second we recall the multiplication rule of Santana and Yudin \cite[Proposition 2.3]{SY}. If $\omega, \pi \in \La(n,n;r)$, then \begin{equation}
	\xi_\omega \xi_\pi = \sum_{\substack{\theta \in \La(n,n,n;r) \\ \theta^3=\omega, \theta^1=\pi}}[\theta]\xi_{\theta^2},
\end{equation}  
where $[\theta]$ is the following product of multinomial coefficients \begin{equation} [\theta]= \prod_{s,t=1}^{n}\tbinom{(\theta^2)_{st}}{\theta_{s1t},\theta_{s2t},\dots,\theta_{snt}}.\end{equation}
In particular, if $\al \in \La(n,r)$, $\omega \in \La(n,n;r)$ and $\al=\omega^2$, then \begin{equation}
	\xi_{\al}\xi{_\omega}=\xi_\omega.
\end{equation}

Let $\La^1(n,n;r)$ be the subset of $\La(n,n;r)$ consisting of the $\omega$ that are upper triangular and not diagonal and let $J$ be the subspace of $S$ spanned by the $\xi_{\omega}$, where $\omega \in \La^1(n,n;r)$. The projective resolution $B_{*}(\la) \to \Delta(\la) \to 0$ of \cite{SY}, denoted there by $B_{*}(W_\la)$, is defined as follows. For $k=0$ we have $B_0(\la)=S\xi_{\la}$ and for $k \ge 1$ we have
\[B_k(\la)= \bigoplus_{\substack{\mu^{(1)}\triangleright \cdots \triangleright\mu^{(k)} \triangleright \la \\ \mu^{(1)},\dots,\mu^{(k)} \in \La(n;r)}} S\xi_{\mu^{(1)}} \otimes \xi_{\mu^{(1)}} J\xi_{\mu^{(2)}}\otimes \cdots \otimes \xi_{\mu^{(k)}}J\xi_{\la}.\]
\begin{definition}\cite[p. 2587]{SY}. If $k>0$ and $\al \triangleright \la$, define $\Omega_k(\la, \al)$ as the set of all $k$-tuples $(\omega_1,\dots,\omega_k)$, where $\omega_1, \dots , \omega_k \in \Lambda^1(n,n;r)$ satisfy \begin{equation}\label{4.1}
		\al=(\omega_1)^2, (\omega_1)^1=(\omega_2)^2, (\omega_2)^1=(\omega_3)^2, \dots , (\omega_{k-1})^1=(\omega_k)^2, (\omega_k)^1=\la.
\end{equation}\end{definition}
A $K$-basis of $B_0(\la)$ is $\{\xi_\omega:\omega \in \La(n,n;r), \omega^1=\la\} $ and a $K$-basis of $B_k(\la)$ for $k\ge 1$ is \[\{\xi_{\omega_0} \otimes\xi_{\omega_1}\otimes \cdots \otimes \xi_{\omega_k}:(\xi_{\omega_0}, \xi_{\omega_1}, \dots, \xi_{\omega_k}) \in \tilde{b_k}(\la)\},\]
where \[\tilde{b_k}(\la)=\{(\omega_0, \omega_1, \cdots, \omega_k): \omega_0 \in \La(n,n;r), (\omega_1,\dots,\omega_k) \in \Omega_k(\la, (\omega_0)^1).\] In terms of these bases, the differential $\partial_k: B_k(\la) \to B_{k-1}(\la)$ of $B_*(\la)$ is given by \begin{equation} 
	\partial_k(\xi_{\omega_0} \otimes \cdots \otimes \xi_{\omega_k})=\sum_{t=0}^{k-1}(-1)^t\xi_{\omega_0} \otimes \cdots \otimes \xi_{\omega_t}\xi_{\omega_{t+1}} \otimes \cdots \otimes \xi_{\omega_k}.
\end{equation}

Moreover, each summand $S\xi_{\mu^{(1)}} \otimes \xi_{\mu^{(1)}} J\xi_{\mu^{(2)}}\otimes \cdots \otimes \xi_{\mu^{(k)}}J\xi_{\la}$ in the right hand side of the definition of $B_k(\la)$, where $k>0$, is isomorphic as an $S$-module to a direct sum of copies of $S\xi_{\mu^{(1)}}$ and the multiplicity of  $S\xi_{\mu^{(1)}}$ is equal to $|\Omega_k(\la, \mu^{(1)})|$.

It will be convenient for our purposes to use the following notation.

\begin{definition} Suppose $k>0$, $\al \triangleright \la$ and $(\omega_1,\dots,\omega_k) \in \Omega_k(\la, \al)$ . \begin{enumerate}\item Let $M[\omega_1,\dots,\omega_k]$$=S\xi_{\al}.$  \item If $k>1$ and $i\ge1$, let \[M[\omega_1,\dots,\omega_i\omega_{i+1},\dots,\omega_k]= \bigoplus_{\theta}M[\omega_1,\dots,\omega_{i-1},\theta^2,\omega_{i+2},\dots,\omega_k],\] where the sum is over all $\theta \in \La(n,n,n;r)$ such that  $\theta^3=\omega_i$ and $\theta^1=\omega_{i+1}.$ \end{enumerate}
\end{definition}
\begin{remark}With the above notation, the restriction of the differential $\partial_k$, where $k>1$, to the summand $M[\omega_1,\dots,\omega_k]$ is the map \[M[\omega_1,\dots,\omega_k]\to M[\omega_2,\dots,\omega_k] \bigoplus \bigoplus_{i} M[\omega_1,\dots,\omega_i\omega_{i+1},\dots,\omega_k]\] whose component $M[\omega_1,\dots,\omega_k]\to M[\omega_2,\dots,\omega_k]$ is right multiplication by $\xi_{\omega_1}$ and whose component $M[\omega_1,\dots,\omega_k]\to M[\omega_1,\dots,\omega_{i-1},\theta^2,\omega_{i+2},\dots,\omega_k]$ is multiplication by $(-1)^{i}[\theta^2] \in K$. For $k=1$ the restriction of the differential $\partial_1$ to $M[\omega_1]$ is the map $M[\omega_1] \to S\xi_{\la}$ that is right multiplication by $\xi_{\omega_1}$.\end{remark}
\subsection{The correspondence $\xi_\omega \mapsto \xi_{\omega^+}$}
The remark and lemma that follow show that for all $k$ there is a bijection between the summands of $B_k(\la)$ and the summands of $B_k(\la^+)$ given by $S\xi_\al \leftrightarrow S'\xi_{\al ^+}$.
\begin{remark}
	It is clear that the following map is a bijection \[\{\al \in \La(n,r): \al \trianglerighteq \la\} \to \{\beta \in \La(n,r+p^d): \beta \trianglerighteq \la^+\}, \al \mapsto \al^+.\]
\end{remark}

\begin{lemma} If $k>0$ and $\al \triangleright \la$, then	$|\Omega_k(\la, \al)| = |\Omega_k(\la^+, \al^+)|.$
\end{lemma}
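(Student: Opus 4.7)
The plan is to show that $(\omega_1, \ldots, \omega_k) \mapsto (\omega_1^+, \ldots, \omega_k^+)$ is a bijection $\Omega_k(\la, \al) \to \Omega_k(\la^+, \al^+)$. First I would verify that this map is well defined, i.e.\ that $(\omega_1^+, \ldots, \omega_k^+)$ actually lies in $\Omega_k(\la^+, \al^+)$. Since adding $p^d$ to position $(1,1)$ of an upper triangular non-diagonal matrix preserves both being upper triangular and being non-diagonal (the off-diagonal entries are untouched, and position $(1,1)$ lies on the diagonal), we have $\omega_i^+ \in \La^1(n,n;r')$. The defining relations $(\omega^+)^1 = ((\omega)^1)^+$ and $(\omega^+)^2 = ((\omega)^2)^+$ are immediate from the definition of $\omega^+$ (the $+$ operation simply adds $p^d$ to the first coordinate of either row/column sum), so the chain (\ref{4.1}) for $(\omega_1^+, \ldots, \omega_k^+)$ follows by applying $+$ to the chain for $(\omega_1, \ldots, \omega_k)$.

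Next, injectivity is obvious since $\omega \mapsto \omega^+$ is injective on $\La(n,n;r)$. The heart of the argument is surjectivity. Given $(\eta_1, \ldots, \eta_k) \in \Omega_k(\la^+, \al^+)$, I must show that each $\eta_i$ has $(\eta_i)_{11} \ge p^d$, so that subtracting $p^d$ from position $(1,1)$ yields a valid element $\omega_i \in \La^1(n,n;r)$ with $\omega_i^+ = \eta_i$. The key observation is that if $\eta \in \La^1(n,n;r')$ is upper triangular, then the first column contains only the entry $\eta_{11}$, hence $(\eta)^1_1 = \eta_{11}$, while the first row has $(\eta)^2_1 = \eta_{11} + \sum_{t \ge 2}\eta_{1t} \ge \eta_{11}$.

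Using these two facts together with the chain relations, I would perform downward induction on $i$. For the base case $i=k$: $(\eta_k)_{11} = (\eta_k)^1_1 = (\la^+)_1 = \la_1 + p^d$. For the inductive step, $(\eta_i)_{11} = (\eta_i)^1_1 = (\eta_{i+1})^2_1 \ge (\eta_{i+1})_{11}$, so $(\eta_i)_{11} \ge \la_1 + p^d \ge p^d$. Hence $\omega_i$ (obtained by subtracting $p^d$ from $(\eta_i)_{11}$) has nonnegative entries, is still upper triangular and non-diagonal, and sums to $r$; the reverse translation of the chain conditions gives $(\omega_1, \ldots, \omega_k) \in \Omega_k(\la, \al)$.

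The main point to watch is the surjectivity estimate $(\eta_i)_{11} \ge p^d$, but this is forced by the upper triangular structure combined with the terminal condition $(\eta_k)^1 = \la^+$; no use of the global hypothesis $p^d > r - \la_1$ is needed for this lemma.
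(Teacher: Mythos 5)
Your proof is correct and is essentially the same as the paper's: both establish the bijection $(\omega_1,\ldots,\omega_k)\mapsto(\omega_1^+,\ldots,\omega_k^+)$, and your downward induction for surjectivity is exactly the paper's chain of inequalities $\al_1+p^d\ge(\eta_1)_{11}\ge\cdots\ge(\eta_k)_{11}=\la_1+p^d$ forced by upper-triangularity. Your closing observation that $p^d>r-\la_1$ is not needed here also matches a remark made in the paper.
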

\begin{proof}If $\omega=(\omega_{ij}) \in \Lambda^1(n,n;r)$ then $\omega^+ \in \Lambda^1(n,n;r')$. Using relations (\ref{4.1}), it easily follows that if $(\omega_1,\dots, \omega_k) \in\Omega_k(\la, \al)$, then  $(\omega_1^+,\dots, \omega_k^+) \in \Omega_k(\la^+, \al^+)$. We claim that the following correspondence is 1-1 and onto
	\[\Omega_k(\la, \al) \ni (\omega_1,\dots, \omega_k) \to (\omega_1^+,\dots, \omega_k^+) \in \Omega_k(\la^+, \al^+).\] 
	
	Indeed, if $(\zeta_1, \dots , \zeta_k) \in \Lambda^1(n,n;r')$, then \begin{equation}
		\al^+=(\zeta_1)^2, (\zeta_1)^1=(\zeta_2)^2, (\zeta_2)^1=(\zeta_3)^2, \dots , (\zeta_{k-1})^1=(\zeta_k)^2, (\zeta_k)^1=\la^+.
	\end{equation} From this and the fact that each $\zeta_i$ is upper triangular with nonnegative entries, we obtain \[\al_1+p^d \ge (\zeta_1)_{11} \ge (\zeta_2)_{11} \ge \cdots \ge (\zeta_k)_{11}=\la_1+p^d,\] where $(\zeta_i)_{st}$ is the entry of $\zeta_i$ at position $(s,t)$. For each $i=1,\dots,k$, define $\omega_i \in \Lambda^1(n,n;r)$ by \[(\omega_i)_{st}=\begin{cases}
		(\zeta_i)_{11}-p^d, &(s,t)=(1,1)\\(\zeta_i)_{st}, &(s,t) \neq (1,1). 
	\end{cases} \] Then one checks that relations (\ref{4.1}) hold, that is $(\omega_1,\dots,\omega_k) \in \Omega_k(\la, \al)$, and $(\omega_1^+,\dots,\omega_k^+)=(\zeta_1,\dots,\zeta_k)$.
\end{proof}

\begin{lemma}
	Let $\omega, \pi \in \La(n,n;r)$. If $\omega$ is upper triangular or if $\pi$ is lower triangular, then the  map \[\{\theta \in \La(n,n,n;r):\theta^1=\pi, \theta^3=\omega\} \to \{\eta \in \La(n,n,n;r'):\eta^1=\pi^+, \eta^3=\omega^+\},\]\[ \theta \mapsto \theta^+, \]
	is a bijection
	
\end{lemma}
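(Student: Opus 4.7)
The plan is to verify directly that $\theta \mapsto \theta^+$ is well-defined, injective, and surjective. Well-definedness is a routine check: for $(t,q)\neq(1,1)$ one has $(\theta^+)^1_{tq} = \sum_s \theta_{stq} = \pi_{tq} = \pi^+_{tq}$, while at $(1,1)$ the extra $p^d$ added at position $(1,1,1)$ contributes to give $(\theta^+)^1_{11} = \pi_{11}+p^d = \pi^+_{11}$; the argument for $(\theta^+)^3 = \omega^+$ is symmetric. Injectivity is immediate since $\theta$ is recovered from $\theta^+$ by subtracting $p^d$ at position $(1,1,1)$.

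The heart of the argument is surjectivity. Given $\eta \in \La(n,n,n;r')$ with $\eta^1=\pi^+$ and $\eta^3=\omega^+$, we must show $\eta_{111} \ge p^d$, because then setting $\theta_{111}=\eta_{111}-p^d$ and $\theta_{stq}=\eta_{stq}$ otherwise produces a $\theta\in\La(n,n,n;r)$ with $\theta^+=\eta$, $\theta^1=\pi$, $\theta^3=\omega$.

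I will establish $\eta_{111}\ge p^d$ by a case split according to the triangularity hypothesis. If $\omega$ is upper triangular, so is $\omega^+$, hence for $s>1$ we have $\sum_q \eta_{s1q} = \omega^+_{s1} = 0$, forcing $\eta_{s11}=0$ for $s>1$; then
\[
\eta_{111} \;=\; \sum_s \eta_{s11} \;=\; \eta^1_{11} \;=\; \pi^+_{11} \;=\; \pi_{11} + p^d \;\ge\; p^d.
\]
If instead $\pi$ is lower triangular, then so is $\pi^+$ (the added $p^d$ sits on the diagonal), and for $q>1$ we have $\sum_s \eta_{s1q} = \pi^+_{1q} = 0$, forcing $\eta_{11q}=0$ for $q>1$; then
\[
\eta_{111} \;=\; \sum_q \eta_{11q} \;=\; \eta^3_{11} \;=\; \omega^+_{11} \;=\; \omega_{11} + p^d \;\ge\; p^d.
\]

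The only point requiring care is the surjectivity step: one must genuinely use the triangularity hypothesis, since without it $\eta$ could distribute the surplus $p^d$ among several entries of the $s=1$ slice or the $q=1$ slice, and the subtraction at $(1,1,1)$ would produce negative values. The two cases above show that either of the stated hypotheses suffices to concentrate the surplus at position $(1,1,1)$, which is precisely what is needed. Since this is exactly how $\theta^+$ is defined, the correspondence is a bijection.
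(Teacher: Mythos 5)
Your proof is correct and follows essentially the same route as the paper: the key point in both is that the triangularity hypothesis forces the relevant slice of $\eta$ to vanish off position $(1,1,1)$, so that $\eta_{111}$ absorbs the entire surplus $p^d$ and the subtraction is legitimate. The only difference is cosmetic — the paper works out the case $\omega$ upper triangular and dismisses the case $\pi$ lower triangular as similar (or via the transpose anti-automorphism), whereas you write out both cases explicitly.
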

\begin{proof}First assume $\omega$ is upper triangular. It is clear from the definitions that if $\theta \in \La(n,n,n;r')$ satisfies $\theta^1=\pi, \theta^3=\omega$, then $\theta^+ \in \La(n,n,n;r)$ and $(\theta^+)^1=\pi^+, (\theta^{+})^3=\omega^+ $.
	
	Suppose $\eta \in \La(n,n,n;r')$ satisfies $\eta^1=\pi^+, \eta^3=\omega^+$. Since $\omega^+$ is upper triangular we have $\sum_{t}\eta_{sqt}=0$ if $s>q$. In particular, $\sum_{t}\eta_{s1t}=0, s>1.$ Since the $\eta_{sqt}$ are nonnegative, we obtain $\eta_{s1t}=0$ for all $s>1$ and all $t$. From this and $\pi^+=\eta^1$ we have \[\pi_{11}+p^d=\sum_s\eta_{s11}=\eta_{111}.\]
	Hence $\eta_{111} \ge p^d$. Using this one easily checks that we have a map 
	\[\{\eta \in \La(n,n,n;r'):\eta^1=\pi^+, \eta^3=\omega^+\} \to \{\theta \in \La(n,n,n;r):\theta^1=\pi, \theta^3=\omega\}, \eta \mapsto \eta^{-},\]
	where $\eta^{-}_{111} =\eta_{111}-p^d$ and $\eta^{-}_{sqt} =\eta_{sqt}$ for all $(s,q,t) \neq (1,1,1)$. Also it is clear from the definition that $(\theta^{+})^{-}=\theta$ and $(\eta^{-})^{+}=\eta$. Hence the map of the lemma is a bijection.
	
	The proof when $\pi$ is lower triangular is similar and thus omitted. (Alternately, one may use the first case and the algebra anti-automorphism $S \to S$ defined by $\xi_\omega \mapsto \xi_{{\omega}^t}$, where ${\omega}^t$ denotes the transpose of ${\omega}^t$.) \end{proof}
In general the map $S \to S', \xi_\omega \mapsto \xi_{\omega ^+},$ is not a homomorphism of algebras. In the next lemma, however, we have two instances where the product rule is `well behaved' with respect to this map. A third instance will be treated in Lemma 4.11.

We note that thus far the assumption on the characteristic $p>0$ of $K$ has not been used. In particular, Lemmas 4.6 and 4.7 are valid for any positive integer in place of $p^d$. Case (3) of the next lemma is the first instance where the characteristic $p>0$ assumption is needed.
\begin{lemma}
	Let $\omega, \pi \in \La(n,n;r)$ such that $\omega$ is upper triangular and let $\theta \in \La(n,n,n;r)$  such that $\theta^1=\pi, \theta^3=\omega.$ \begin{enumerate}
		\item If $\pi$ is upper triangular, then $[\theta]=[\theta^+].$
		\item If $\pi$ is lower triangular, $\omega^1 \unrhd \la$ and $p^d > r-\la_1$, then $[\theta] \equiv [\theta^+] \mod p.$
		\item Under the assumptions of either (1) or (2) above, we have the equivalence \begin{equation*}
			\xi_\omega \xi_\pi = \sum_{\substack{\theta \in \La(n,n,n;r) \\ \theta^3=\omega, \theta^1=\pi}}[\theta]\xi_{\theta^2} \Leftrightarrow \xi_{{\omega}^+} \xi_{{\pi}^+} = \sum_{\substack{{\theta}^+ \in \La(n,n,n;r') \\ {({\theta}^+)}^3={\omega}^+, {({\theta}^+)}^1={\pi}^+}}[\theta]\xi_{({{\theta}^+})^2}.
		\end{equation*}
	\end{enumerate}
\end{lemma}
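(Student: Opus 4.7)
The plan is to reduce everything to analyzing the single factor of the product $[\theta]=\prod_{s,t}\binom{(\theta^2)_{st}}{\theta_{s1t},\ldots,\theta_{snt}}$ at position $(s,t)=(1,1)$, since this is the only factor that involves the entry $\theta_{111}$ (both in the multinomial top and in one of its slots), and hence the only factor that can change under $\theta\mapsto\theta^+$. Every other factor satisfies $\theta^+_{sit}=\theta_{sit}$ for $i=1,\ldots,n$, so it is left untouched by the $+$ operation.

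For part (1), I would first observe that $\omega=\theta^3$ upper triangular forces $\theta_{stq}=0$ for $s>t$, while $\pi=\theta^1$ upper triangular forces $\theta_{stq}=0$ for $t>q$; together these confine $\theta$ to $\{(s,t,q):s\le t\le q\}$. In particular the entries $\theta_{1i1}$ vanish for all $i\ge 2$, so the $(1,1)$ factor collapses to $\binom{\theta_{111}}{\theta_{111},0,\ldots,0}=1$. The analogous computation in $\theta^+$ gives $\binom{\theta_{111}+p^d}{\theta_{111}+p^d,0,\ldots,0}=1$, yielding $[\theta]=[\theta^+]$.

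For part (2), the condition $\pi$ lower triangular instead forces $\theta_{stq}=0$ for $t<q$, so the $\theta_{1i1}$ with $i\ge 2$ need not vanish. The $(1,1)$ factor is $\binom{(\theta^2)_{11}}{\theta_{111},\theta_{121},\ldots,\theta_{1n1}}$ and becomes $\binom{(\theta^2)_{11}+p^d}{\theta_{111}+p^d,\theta_{121},\ldots,\theta_{1n1}}$ for $\theta^+$. Applying Lemma 4.1(2) reduces the claim to the inequality $p^d>(\theta^2)_{11}-\theta_{111}=\sum_{i\ge 2}\theta_{1i1}$, and this is where the hypotheses on $\omega^1$ and on $p^d$ finally enter. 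I would establish it by
\[
\sum_{i\ge 2}\theta_{1i1}\;\le\;\sum_{\substack{s,t,q\\ t\ne 1}}\theta_{stq}\;=\;r-(\omega^1)_1\;\le\;r-\la_1\;<\;p^d,
\]
using $(\omega^1)_1=\sum_{s,q}\theta_{s1q}$ and $\sum_{s,t,q}\theta_{stq}=r$ for the equality, $\omega^1\unrhd\la$ for the penultimate inequality, and the hypothesis for the last. This bound is the main (mild) obstacle.

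Part (3) is then an assembly step. I would first verify the direct identity $(\theta^+)^2=(\theta^2)^+$ from the definitions, so that $\xi_{(\theta^+)^2}=\xi_{(\theta^2)^+}$ in $S'$. Lemma 4.7 supplies the bijection $\theta\leftrightarrow\theta^+$ between the index sets $\{\theta:\theta^3=\omega,\theta^1=\pi\}$ and $\{\eta:\eta^3=\omega^+,\eta^1=\pi^+\}$, while parts (1) and (2) give $[\theta]\equiv[\theta^+]\pmod{p}$, which is an equality in $K$. Applying the Santana--Yudin multiplication rule (4.2) to $\xi_{\omega^+}\xi_{\pi^+}$ and reindexing the resulting sum via $\eta=\theta^+$ then turns each side of the asserted equivalence into the other.
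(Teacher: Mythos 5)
Your proof is correct and follows essentially the same approach as the paper: reduce to the $(s,q)=(1,1)$ factor of $[\theta]$, apply Lemma 4.1(2), and invoke Lemma 4.7 for the bijection on index sets in part (3). The only variation is that in part (2) you bound $(\theta^2)_{11}-\theta_{111}=\sum_{i\ge 2}\theta_{1i1}$ directly by $r-(\omega^1)_1\le r-\la_1$, whereas the paper reaches the same estimate via $\theta_{111}=\pi_{11}\ge\la_1$ together with $(\theta^2)_{11}\le r$, using both triangularity hypotheses; both derivations are valid.
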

\begin{proof} (1) Since $\theta_{sqt}=\theta^{+}_{sqt}$ if $(s,q,t) \neq (1,1,1)$ it suffices by eqn. (4.3) to show that \[\tbinom{(\theta^2)_{11}}{\theta_{111},\theta_{121},\dots,\theta_{1n1}}=\tbinom{((\theta^+)^2)_{11}}{(\theta^+)_{111},(\theta^+)_{121},\dots,(\theta^+)_{1n1}}.\]
	Since $\theta^1=\pi$ and $\pi$ is upper triangular, we have $\sum_{s}\theta_{sq1}=0$ for all $q>1$. Since the $\theta_{sq1}$ are nonnegative, we obtain $\theta_{121}= \theta_{131}= \cdots = \theta_{1n1}=0$ and the above multinomial coefficients are equal to 1.
	
	(2) By the first line of the previous paragraph, it suffices to show that \[\tbinom{(\theta^2)_{11}}{\theta_{111},\theta_{121},\dots,\theta_{1n1}} \equiv \tbinom{(\theta^2)_{11}+p^d}{\theta_{111}+p^d,\theta_{121},\dots,\theta_{1n1}} \mod p.\] We will show that $p^d > (\theta^2)_{11}-\theta_{111}$ and hence the above congruence follows from Lemma 4.1(2). As in the second paragraph of the proof of Lemma 4.7 (for $\theta$ in place of $\eta$), we have $\theta _{111}=\pi_{11}.$ Using that $\pi$ is lower triangular and the hypotheses $\pi^2 =\omega^1$ and $\omega^1 \unrhd \la$, we have \[\pi_{11}=(\pi^2)_1=(\omega^1)_1\ge \la_1.\] Hence $\theta_{111} \ge \la_1$.  Since $r= \sum_{s,q,t}\theta_{sqt}$ and the $ \theta_{sqt}$ are nonnegative, we have $r \ge (\theta^2)_{11}$. Hence $p^d >r-\la_1 \ge (\theta^2)_{11}-\la_1 \ge (\theta^2)_{11}-\theta_{111}.$
	
	(3) This follows from parts (1) and (2) of the lemma and Lemma 4.7. \end{proof}

Let $\al \in \La(n,r)$. In \cite[Theorem A.4]{SY} it was shown that the map \begin{equation}
	\psi_\al : D(\al) \to S\xi_{\al}, \; e^{(\pi)} \mapsto \xi_{\pi},
\end{equation}
is an isomorphism of $GL_n(K)$-modules, where $\pi \in \La(n,n;r)$, $\pi^1 = \al$ and \[e^{(\pi)}=1^{(\pi_{11})}2^{(\pi_{21})}\cdots n^{(\pi_{n1})} \otimes \cdots \otimes 1^{(\pi_{1n})}2^{(\pi_{2n})}\cdots n^{(\pi_{nn})}.\] In the next lemma, we identify the map of Lemma 3.3(2) under the above isomorphism.

Recall the following definition from subsection 2.3. For $T \in \mathrm{RSST}_{\al}(\mu)$, \[T=\begin{matrix*}[l]
	1^{(a_{11})}2^{(a_{21})}\cdots n^{(a_{n1})}\\
	\cdots\\
	1^{(a_{1n})}2^{(a_{2n})}\cdots n^{(a_{nn})},\end{matrix*}\]
define $\omega(T)=(\al_{ij})$. Then $\omega(T)^1=\mu$ and $\omega(T)^2=\al$. According to  \cite[p. 2593]{SY}, this defines a bijection between the sets $T \in \mathrm{RSST}_{\al}(\mu)$ and $\Omega(\al,\mu)$, where for any $\beta \in \La(n;r)$, \[\Omega(\al,\beta)=\{\omega \in \La(n,n;r): \omega^1 =\beta , \omega^2=\al\}.\]

\begin{remark} With the previous notation we note that if  $T \in \mathrm{RSST}_{\al}(\mu)$ is semistandard, then $\omega(T)$ is lower triangular.\end{remark}

For $\mu \in \Lambda^+(n,r)$ we write $\pi_{\D(\mu)}: S\xi_{\mu} \to \D(\mu)$ and $\pi_{L(\mu)}: S\xi_{\mu} \to L(\mu)$ for the indicated natural projections. Recall we have the maps $\phi_\omega$ of Definition 2.3.

\begin{lemma} Let $\al \in \La(n,r)$. \begin{enumerate} \item Let $\beta \in \La(n,r)$ and $\omega \in \Omega(\al,\beta).$ Then under the isomorphism \[\Hom_S(D(\al),D(\beta)) \simeq \Hom_S(S{\xi_\al},S{\xi_{\beta}})\] induced by the map in (4.8), the image of $\phi_\omega$ is the map  $R_{\xi_{\omega}} :S\xi_{\al }\to S\xi_{\beta}$ defined as right multiplication by $\xi_{\omega}$ .
		\item Let $\mu \in \Lambda^+(n,r)$ such that $\mu_2 \le \al_1$. Then under the isomorphism 
		\[\Hom_S(S\xi_{\al}, \D(\mu)) \to \Hom_{S'}(S'\xi_{\al^+}, \D(\mu^+))\] induced by Lemma 3.3(2) and (4.8), the image of $\pi_{\D(\mu)} \circ R_{\xi_{\omega}}$, where $\omega \in \Omega(\al, \mu)$, is $ \pi_{\D(\mu^+)} \circ R_{\xi_{\omega^+}}$.
	\end{enumerate}
\end{lemma}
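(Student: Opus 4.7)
The plan for part (1) is to check by direct computation on basis vectors that the identity $\psi_\beta \circ \phi_\omega = R_{\xi_\omega} \circ \psi_\al$ holds as maps $D(\al) \to S\xi_\beta$. Fix $\pi \in \La(n,n;r)$ with $\pi^1 = \al$ and evaluate on $e^{(\pi)}$. Expanding $\phi_\omega(e^{(\pi)})$ via the coalgebra structure of $DV$, the $i$-th tensor factor $\prod_s s^{(\pi_{si})}$ of $e^{(\pi)}$ comultiplies into summands indexed by nonnegative matrices $\pi^{(i)} = (\pi^{(i)}_{sj})$ with row sums $(\pi_{si})_s$ and column sums $(\omega_{ij})_j$. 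Bundling these into a $3$-tensor $\theta_{stq} := \pi^{(t)}_{sq}$ produces an element of $\La(n,n,n;r)$ with $\theta^3 = \pi$ and $\theta^1 = \omega$; multiplying the divided powers $\prod_t s^{(\theta_{stq})}$ within each of the $n$ output slots collects a factor of $\prod_{s,q} \tbinom{(\theta^2)_{sq}}{\theta_{s1q},\ldots,\theta_{snq}} = [\theta]$ in front of $e^{(\theta^2)}$. Applying $\psi_\beta$ then yields $\sum_\theta [\theta]\,\xi_{\theta^2}$, which by the Santana--Yudin product rule (4.2) is exactly $\xi_\pi \xi_\omega = R_{\xi_\omega}(\psi_\al(e^{(\pi)}))$.

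For part (2), the plan is to combine part (1) (in both the original and the $+$-shifted setting) with Lemma 3.3(2), via the key observation that $\omega(T^+) = \omega(T)^+$ for every row semistandard $T \in \mathrm{RSST}_\al(\mu)$. Indeed, inserting $p^d$ ones at the start of the top row of $T$ adds $p^d$ to the $(1,1)$-entry of $\omega(T)$ and leaves every other entry fixed, matching the definition of $\omega^+$ verbatim. Consequently, if $\omega \in \Omega(\al,\mu)$ corresponds to $T$ under the bijection recalled just before Remark 4.9, then $\phi_\omega = \phi_T$ and $\phi_{\omega^+} = \phi_{T^+}$.

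I would then trace $\pi_{\D(\mu)} \circ R_{\xi_\omega}$ through the composite isomorphism. Precomposition with $\psi_\al$ together with part (1) gives
\[
(\pi_{\D(\mu)} \circ R_{\xi_\omega}) \circ \psi_\al = \pi_{\D(\mu)} \circ \psi_\mu \circ \phi_\omega = \pi_{\D(\mu)} \circ \phi_T,
\]
where the last equality uses that the natural projection $D(\mu) \to \D(\mu)$ factors through $\psi_\mu$. Lemma 3.3(2) sends this element of $\Hom_S(D(\al),\D(\mu))$ to $\pi_{\D(\mu^+)} \circ \phi_{T^+} = \pi_{\D(\mu^+)} \circ \phi_{\omega^+}$ in $\Hom_{S'}(D(\al^+),\D(\mu^+))$, and running part (1) backwards in the $+$-shifted setting identifies this with $\pi_{\D(\mu^+)} \circ R_{\xi_{\omega^+}}$, as desired.

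The main obstacle is the bookkeeping in part (1): one must set up the coalgebra decomposition of $e^{(\pi)}$ so that the indexing of its summands aligns cleanly with the $3$-tensor $\theta$ and with the multinomial coefficient $[\theta]$ that appears in the Santana--Yudin product formula. Once that identification is made, both parts of the lemma follow essentially formally.
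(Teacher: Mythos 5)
Your proposal is correct, but your treatment of part (1) takes a genuinely different route from the paper's. You verify the identity $\psi_\beta \circ \phi_\omega = R_{\xi_\omega} \circ \psi_\al$ on every basis vector $e^{(\pi)}$ of $D(\al)$, unwinding the comultiply--then--multiply definition of $\phi_\omega$, bundling the intermediate data into a three--index tensor $\theta$, and matching the resulting coefficients against the full Santana--Yudin product rule (4.2). The paper instead observes that $\phi_\omega$, $R_{\xi_\omega}$, $\psi_\al$, $\psi_\beta$ are all $S$-module homomorphisms and that $D(\al)$ is cyclic over $S$ generated by $e^{(\al)}$, so it suffices to check the identity on that single generator; there both sides evaluate immediately to $\xi_\omega$, using only the trivial special case $\xi_\al\xi_\omega = \xi_\omega$ (eqn.\ (4.4)) rather than (4.2) in full. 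Your computation is longer, but it has the virtue of making explicit that (4.2) encodes exactly the combinatorics of comultiplying and remultiplying divided powers, a fact the paper's cyclicity shortcut leaves hidden. For part (2) your argument is essentially what the paper means by ``This follows from part (1) for $\beta=\mu$'': you correctly isolate the key observation $\omega(T^+) = \omega(T)^+$, so that the bijection between row semistandard tableaux and $\Omega(\al,\mu)$ commutes with the $+$ operation, and then part (1) applied on both the $S$- and $S'$-sides, together with Lemma 3.3(2), yields the claim. One small point worth stating explicitly, which you glide over, is that the two maps denoted $\pi_{\D(\mu)}$ (from $D(\mu)$ and from $S\xi_\mu$) are identified via $\psi_\mu$; this is implicit in the paper's notation but is exactly what justifies the equality $\pi_{\D(\mu)}\circ\psi_\mu\circ\phi_\omega = \pi_{\D(\mu)}\circ\phi_T$ in your chain.
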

\begin{proof} (1)
	We need to check that the following diagram commutes. 
	\begin{center}
		\begin{tikzcd}
			D(\al)\arrow{d}{\psi_\al} \arrow{rr}{\phi_\omega}
			&& D(\beta) \arrow{d}{\psi_\beta} \\
			S\xi_\al \arrow{rr}{R_{\xi_{\omega}}}
			&& 	S\xi_\beta
		\end{tikzcd}
	\end{center}	
	Since this is a diagram of $S$-homomorphisms and as a $S$-module $D(\al)$ is cyclic generated by $e^{(\al)}$, it suffices to show that $\psi_\beta(\phi_\omega(e^{(\al)}))=	R_{\xi_{\omega}} (\psi_\al (e^{(\al)})).$ We have \[R_{\xi_{\omega}} (\psi_\al (e^{(\al)}))=R_{\xi_{\omega}}(\xi_\al) = \xi_\al\xi_{\omega}=\xi_{\omega},\]
	the last equality owing to eqn. (4.4). On the other hand we have \[\psi_\beta(\phi_\omega(e^{(\al)}))=\psi_\beta(e^{(\omega)})=\xi_{\omega}.\]
	
	(2) This follows from part (1) for $\beta = \mu$.
\end{proof}
\subsection{Proof of Theorem 1.1(1)}We will show that the complexes of vector spaces $\Hom_S(B_*(\la), \D(\mu))$ and $\Hom_{S'}(B_*(\la^+), \D(\mu^+))$ are isomorphic, where $B_*(\la)$ and $B_*(\la^+)$ are the projective resolutions of $\D(\la)$ and $\D(\la^+)$, respectively, of Santana and Yudin that we recalled in subsection 4.2.

From Remark 4.5 and Lemma 4.6 it follows that the map of Lemma 4.10(2) yields an isomorphism  \[\Hom_S(B_k(\la), \D(\mu)) \to \Hom_{S'}(B_k(\la^+), \D(\mu^+))\] for every $k$. Adopting the notation of Remark 4.4, it suffices to show that the following diagrams commute

\begin{center}
	\begin{tikzcd}
		\Hom_S(M[\omega_2,\dots,\omega_k],\Delta(\mu))\arrow{d} \arrow{r}{}
		& \Hom_S(M[\omega_1,\dots,\omega_k],\Delta(\mu)) \arrow{d}\\ \arrow{r}{}
		\Hom_{S'}(M[\omega_2 ^+,\dots,\omega_k ^+],\Delta(\mu^+)) 
		& \Hom_{S'}(M[\omega_1 ^+,\dots,\omega_k ^+],\Delta(\mu^+))
	\end{tikzcd}
\end{center}
and
\begin{center}
	\begin{tikzcd}
		\Hom_S(M[\omega_1,\dots,\omega_i\omega_{i+1},\dots,\omega_k],\Delta(\mu))\arrow{d} \arrow{r}{}
		& \Hom_S(M[\omega_1,\dots,\omega_k],\Delta(\mu)) \arrow{d}\\ \arrow{r}{}
		\Hom_{S'}(M[\omega_1 ^+,\dots,\omega_i ^+ \omega_{i+1}^+,\dots,\omega_k ^+],\Delta(\mu ^+)) 
		& \Hom_{S'}(M[\omega_1 ^+,\dots,\omega_k ^+],\Delta(\mu^+))
	\end{tikzcd}
\end{center}
where the vertical maps are the isomorphisms of Lemma 4.10(2) and the horizontal maps are the indicated components of the differentials of the complexes $\Hom_S(B_*(\la), \D(\mu))$ and $\Hom_S(B_*(\la^+), \D(\mu^+)).$

Consider the first diagram. We know that $\Hom_S(M[\omega_2,\dots,\omega_k],\Delta(\mu))$ is generated by maps of the form $f=\pi_{\D(\mu)}\circ R_{\xi_\omega}$, where $\omega$ is lower triangular (Remark 4.9). We have \[\Hom_S(R_{\xi_{\omega_1}}, \D(\mu))(f)=\pi_{\D(\mu)} \circ R_{\xi_{\omega_1} \xi_\omega}.\] Using eqn. (4.2) and applying the vertical isomorphism, we see that that the image of $f$ in the clockwise direction is the map 
\[\pi_{\D(\mu ^+)} \circ ( \sum_{\theta}[\theta ^2] R_{\xi_{(\theta ^2)^+}}): M[\omega_1 ^+,\dots,\omega_k ^+] \to \D(\mu ^+),\]
where the sum ranges over all $\theta \in \La(n,n;r)$ such that $\theta ^3 = \omega_1, \theta ^1=\omega$. In the counterclockwise direction, the image of $f$ is the map \[\pi_{\D(\mu ^+)} \circ R_{\xi_{(\omega_1)^+}\xi_{\omega ^+}}: M[\omega_1 ^+,\dots,\omega_k ^+] \to \D(\mu ^+).\] Since $\omega_1$ is upper triangular (cf. Definition 4.2) and $\omega$ is lower triangular, we may apply Lemma 4.8(3) to conclude that the two images coincide and the diagram commutes.

Consider the second diagram and Remark 4.4. Since both $\omega_i$ and $\omega_{i+1}$ are upper triangular, we may apply Lemma 4.8(3) to conclude that it suffices to show that the following diagram commutes
\begin{center}
	\begin{tikzcd}
		\Hom_S(M[\omega_1,\dots,\theta^2,\dots,\omega_k],\Delta(\mu))\arrow{d} \arrow{r}{}
		& \Hom_S(M[\omega_1,\dots,\omega_k],\Delta(\mu)) \arrow{d}\\ \arrow{r}{}
		\Hom_{S'}(M[\omega_1 ^+,\dots,(\theta ^2)^+,\dots,\omega_k ^+],\Delta(\mu ^+)) 
		& \Hom_{S'}(M[\omega_1 ^+,\dots,\omega_k ^+],\Delta(\mu^+))
	\end{tikzcd}
\end{center}
for each $\theta \in \La(n,n,n;r)$ such that  $\theta^3=\omega_i$ and $\theta^1=\omega_{i+1}.$ But this is clear as both horizontal maps are multiplication by $(-1)^i[\theta ^2] \in K$.
\subsection{Weight subspaces of simples modules}
For the proof of Theorem 1.1(2), we will need an analog of Lemma 4.10(2) for the simple modules $L(\mu)$ and $L(\mu^+)$ in place of the corresponding Weyl modules. Recall that $\D(\la)$ has a unique maximal submodule $\rad\D(\la)$ and that $L(\la)=\D(\la)/\rad\D(\la)$, see \cite[Part II, 2.14]{Jan}.
\begin{lemma}  Let $\al \in \La(n;r)$ such that $\al \unrhd \la$. Suppose  $p^d > r-\la_1$. \begin{enumerate}
		\item Let $\omega, \pi \in \La(n,n;r)$ such that $\pi$ is lower triangular and $\omega^1 = \al \unrhd \la$. We have the equivalence\begin{equation*}
			\xi_\omega \xi_\pi = \sum_{\substack{\theta \in \La(n,n,n;r) \\ \theta^3=\omega, \; \theta^1=\pi}}[\theta]\xi_{\theta^2} \; \Leftrightarrow \; \xi_{{\omega}^+} \xi_{{\pi}^+} = \sum_{\substack{{\theta}^+ \in \La(n,n,n;r') \\ {({\theta}^+)}^3={\omega}^+, \;  {({\theta}^+)}^1={\pi}^+}}[\theta]\xi_{({{\theta}^+})^2}.
		\end{equation*}
		\item Suppose $\la_1 \ge \frac{r}{2}$   and let $g \in \Hom_S(S\xi_{\al}, \D(\mu))$. We have the equivalence \[Img \subseteq rad\D(\mu)  \Leftrightarrow Img^+ \subseteq rad\D(\mu^+),\] where $g^+$ is the image of $g$ under the isomorphism of Lemma 4.10 (2).
	\end{enumerate}
\end{lemma}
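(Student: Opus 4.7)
The plan is to prove part (1) as a Schur-algebra identity and then feed it into part (2). For part (1), I will first invoke Lemma 4.7 (in the $\pi$ lower triangular case) to obtain the bijection $\theta \leftrightarrow \theta^+$ between the two index sets, so what remains is the congruence $[\theta] \equiv [\theta^+] \bmod p$. By eqn.\ (4.3), the only multinomial in the product changing under $\theta \mapsto \theta^+$ is the one at position $(s,t)=(1,1)$. To apply Lemma \ref{binom}(2), I need the bound $p^d > (\theta^2)_{11} - \theta_{111}$. Since $\theta_{1t1} \le (\theta^3)_{1t} = \omega_{1t}$ and in turn $\omega_{1t} \le (\omega^1)_t = \al_t$, I obtain
\[
(\theta^2)_{11} - \theta_{111} = \sum_{t \ge 2}\theta_{1t1} \le \sum_{t \ge 2}\al_t = r - \al_1 \le r - \la_1 < p^d,
\]
where $\al \unrhd \la$ is used in the penultimate inequality. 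The congruence, and hence the equivalence of product formulas, follow at once.

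For part (2), my starting point is the characterization that, for $v \in \D(\mu)_\al$, one has $v \in \rad \D(\mu)$ if and only if $\xi_\omega v = 0$ in $\D(\mu)_\mu$ for every $\omega \in \La(n,n;r)$ with $\omega^1 = \al$ and $\omega^2 = \mu$. Indeed $\D(\mu) = S[U_\mu]$ with $\D(\mu)_\mu = K[U_\mu]$ one-dimensional, so the submodule $Sv$ is proper precisely when $(Sv) \cap \D(\mu)_\mu = 0$, and this intersection is spanned by the elements $\xi_\omega v$ with $\omega$ as above; the reverse direction uses $\rad \D(\mu)_\mu = 0$. The next key step is to verify that $\omega \mapsto \omega^+$ is a bijection between $\{\omega : \omega^1 = \al, \omega^2 = \mu\}$ and $\{\omega' : (\omega')^1 = \al^+, (\omega')^2 = \mu^+\}$. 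For surjectivity, any such $\omega'$ satisfies $\omega'_{11} \ge \mu_1 + \al_1 - r + p^d$; since we may assume $\mu \unrhd \al$ (else $\D(\mu)_\al = 0$ and there is nothing to prove) and $\al_1 \ge \la_1 \ge r/2$, we have $\al_1, \mu_1 \ge r/2$, forcing $\omega'_{11} \ge p^d$, so that $\omega'$ is of the form $\omega^+$.

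It then remains to match the vanishing conditions. Writing $v = \sum_T c_T [T]$ in the semistandard basis, I compute $\xi_\omega[T] = \pi_{\D(\mu)}(\xi_\omega \xi_{\omega(T)})$; since $T$ is semistandard, $\omega(T)$ is lower triangular by Remark 4.9, so part (1) applies to $\xi_\omega \xi_{\omega(T)}$ and matches it term-by-term with $\xi_{\omega^+}\xi_{\omega(T)^+}$ via $[\theta] \equiv [\theta^+] \bmod p$. Each image $\pi_{\D(\mu)}(\xi_{\theta^2}) = [T_{\theta^2}]$ is a tableau of shape and weight $\mu$, hence equals $c_\theta[U_\mu]$ for some $c_\theta \in K$; using $(\theta^+)^2 = (\theta^2)^+$ and $(U_\mu)^+ = U_{\mu^+}$ together with Corollary \ref{cor} applied with $\al = \mu$ (for which $\mu_2 \le \mu_1$ trivially holds), the very same scalar $c_\theta$ satisfies $[(T_{\theta^2})^+] = c_\theta[U_{\mu^+}]$. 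Summing over $T$ yields $\xi_\omega v = c[U_\mu]$ if and only if $\xi_{\omega^+}v^+ = c[U_{\mu^+}]$ with the same $c$, hence $\xi_\omega v = 0 \Leftrightarrow \xi_{\omega^+}v^+ = 0$. Combined with the bijection, this proves $v \in \rad \D(\mu) \Leftrightarrow v^+ \in \rad \D(\mu^+)$, whence the claim for the image of $g$. The main obstacle I anticipate is precisely the surjectivity of $\omega \mapsto \omega^+$ on these weight-raising index sets, which is the one place where $\la_1 \ge r/2$ is used decisively: without it, the radical test on the $\mu^+$ side would probe $\omega'$ with $\omega'_{11} < p^d$ lying outside the image of the correspondence, and the reduction to part (1) would break down.
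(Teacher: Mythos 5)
Your proof is correct and takes essentially the same route as the paper: part (1) is the identical bound $(\theta^2)_{11}-\theta_{111}\le r-\al_1\le r-\la_1<p^d$, and part (2) reduces to vanishing in the highest weight space, establishes the bijection $\omega\mapsto\omega^+$ on the index set $\{\omega:\omega^1=\al,\ \omega^2=\mu\}$ using $\la_1\ge r/2$, and transfers the vanishing via part (1) together with Corollary \ref{cor} applied with $\al=\mu$. The only difference is presentational: you test the radical by left-multiplying the generator $g(\xi_\al)$ of the image by the elements $\xi_\omega$, whereas the paper evaluates $g$ on the $\mu$-weight space of $S\xi_\al$, which is the same computation viewed from the other side.
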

\begin{proof} (1)  According to Lemma 4.7, it suffices to show that $[\theta]=[\theta^+]$ in $K$ for all $\theta \in \La(n,n,n;r)$  such that $\theta^1=\pi$ and $\theta^3=\omega$ and hence it suffices to show that \[\tbinom{(\theta^2)_{11}}{\theta_{111},\theta_{121},\dots,\theta_{1n1}} \equiv \tbinom{(\theta^2)_{11}+p^d}{\theta_{111}+p^d,\theta_{121},\dots,\theta_{1n1}} \mod p.\]
	Since $\theta^3=\omega$ and $\omega^1=\al$, we have \[\theta_{121}+\dots+\theta_{1n1} \le \al_2+\dots+a_n=r-\al_1 \]
	and since $\omega^1 \unrhd \la$ we have $\al_1 \ge \la_1.$ Hence $\theta_{121}+\dots+\theta_{1n1} \le r-\la_1<p^d.$ The desired result follows from Lemma 4.1(1).
	
	(2) Since $\D(\mu)$ and $\D(\mu^+)$ are highest weight modules with corresponding highest weights $\mu$ and $\mu^+$ respectively, it suffices to show that \begin{equation}(S\xi_{\al})_{\mu} \subseteq \ker g  \Leftrightarrow (S'\xi_{\al^+})_{\mu^+} \subseteq \ker g^+.\end{equation}
	
	Since $\Hom_S(S\xi_{\al}, \D(\mu))$ is generated by the maps $ \pi_{\D(\mu)} \circ R_{\xi_{\pi}}$,  where $\pi \in \La(n,n;r)$ satisfies $\pi^1=\mu$, $\pi^2=\al$ and is lower triangular (cf. Remark 4.9), write \[g=\sum_{j}d_j\pi_{\D(\mu)} \circ R_{\xi_{\pi_j}}\] accordingly, where $d_j \in K$. The hypothesis $\mu_2 \le \al _1$ of Lemma 4.10(2) holds because we have $\la_1 \ge \frac{r}{2}\Rightarrow  \al_1 \ge \la_1 \ge \frac{r}{2} = \frac{1}{2}{(\mu_1+\mu_2+\dots+\mu_n)} \ge \frac{1}{2}{(\mu_2+\mu_2)} =\mu_2$ and thus we obtain  \[g^+=\sum_{j}d_j\pi_{\D(\mu^+)} \circ R_{\xi_{(\pi_j)^+}}.\]
	
	Assume $(S\xi_{\al})_{\mu} \subseteq \ker g$ and suppose $y \in (S'\xi_{\al^+})_{\mu^+}$. The vector space $(S'\xi_{\al^+})_{\mu^+}$ is spanned by the $\xi_\rho$, where $\rho \in \La(n,n;r')$, $\rho^1=\al^+$ and $\rho^2=\mu^+$. We claim that the hypothesis $\la_1 \ge \frac{r}{2}$ implies that each such $\rho$ is of the form $\rho=\omega^+$, where $\omega \in \La(n,n;r)$, $\omega^1=\al$ and $\omega^2=\mu$. Indeed, using $\rho^2=\mu^+$ we have \[\mu_1+p^d=\rho_{11}+\rho_{12}+\dots+\rho_{1n}\le \rho_{11} +\al_2 +\dots+\al_n=\rho_{11}+r-\al_1,\] the inequality owing to $\rho^1=\al^+$. Thus $\rho_{11}-p^d \ge \mu_1+\al_1-r \ge 2\la_1-r \ge 0$ because $\mu \unrhd \la$ and $\al \unrhd \la$. Thus $\rho=\omega^+$ for some $\omega \in \La(n,n;r)$ such that $\omega^1=\al$ and $\omega^2=\mu$. Hence $y=\sum_{i}c_i\xi_{(\omega_i)^+}$ for some $c_i \in K$ and  $\omega_i \in \La(n,n;r)$ such that $(\omega_i)^1=\al$ and $(\omega_i)^2=\mu$.  Substituting we obtain \begin{equation}g^+(y)=\sum_{i,j}c_id_j\pi_{\D(\mu^+)}(\xi_{(\omega_i)^+}\xi_{(\pi_j)^+}).\end{equation}
	Letting $x=\sum_{i}c_i\xi_{\omega_i}$ we note that $x \in (S\xi_{\al})_{\mu}$ and thus by hypothesis we have $g(x)=0$ in $\D(\mu)$, that is \begin{equation}\sum_{i,j}c_id_j\pi_{\D(\mu)}(\xi_{\omega_i}\xi_{\pi_j})=0.\end{equation} In the above equation each $\pi_j$ is lower triangular and thus we may apply the first part of the lemma to (4.10) and (4.11) to obtain
	
	\begin{equation}g^+(y)=\sum_{i,j}c_id_j\pi_{\D(\mu^+)}\Big(\sum_{\substack{{\theta}^+ \in \La(n,n,n;r') \\ {({\theta}^+)}^3={(\omega_i)}^+, \;  {({\theta}^+)}^1={(\pi_j)}^+}}[\theta]\xi_{(\theta ^+)^2}\Big).\end{equation}
	and 
	\begin{equation}\sum_{i,j}c_id_j\pi_{\D(\mu)}\Big(\sum_{\substack{\theta \in \La(n,n,n;r) \\ \theta^3=\omega_i, \; \theta^1=\pi_j}}[\theta]\xi_{\theta ^2}\Big)=0.\end{equation}
	Now each element $\pi_{\D{(\mu)}}(\xi_{\theta^2}) \in \D(\mu)$ in eqn. (4.13) is of the form $[U_{\theta}]$, where $U_{\theta} \in \mathrm{Tab}_{\mu}(\mu)$. Hence we may apply Corollary 3.4 to  eqns. (4.12) and (4.13) to conclude that $g^+(y)=0$.
	
	Conversely, assume $(S'\xi_{\al^+})_{\mu^+} \subseteq \ker g^+$ and suppose $x \in (S\xi_{\al})_{\mu}$. This is similar to the previous case. The vector space $(S\xi_{\al})_{\mu}$ is spanned by the $\xi_\omega$, where $\omega \in \La(n,n;r)$, $\omega^1=\al$ and $\omega^2=\mu$. Write $x=\sum_{i}c_i\xi_{\omega_i}$ accordingly for some $c_i \in K$ and let $y=\sum_{i}c_i\xi_{{\omega_i}^+} \in (S'\xi_{\al^+})_{\mu^+}$. From the hypothesis $g^+(y)=0$ we obtain 
	\[\sum_{i,j}c_id_j\pi_{\D(\mu^+)}(\xi_{(\omega_i)^+}\xi_{(\pi_j)^+})=0.\]
	On the other hand, upon substitution we have \[g(x)=\sum_{i,j}c_id_j\pi_{\D(\mu)}(\xi_{\omega_i}\xi_{\pi_j}).\]
	Since each $\pi_j$ is lower triangular we may apply the first part of the lemma and Corollary 3.4 to conclude that $g(x)=0$ in $\D(\mu^+)$.	
\end{proof}

\begin{proposition}  Suppose  $p^d > r-\la_1$ and $\la_1 \ge \frac{r}{2}$. Let $\al \in \Lambda(n,r)$ such that $\al \unrhd \la$. Then there is an isomorphism  $\Hom_S(S\xi_{\al}, L(\mu)) \to \Hom_{S'}(S'\xi_{\al^+}, L(\mu^+))$ such that \[\pi_{L(\mu)} \circ R_{\xi_{\omega}} \mapsto \pi_{L(\mu ^+)} \circ R_{\xi_{\omega ^+}}\] for all $\omega \in \Omega(\al, \mu)$.
\end{proposition}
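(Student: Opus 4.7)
The plan is to derive this proposition as a short formal consequence of Lemma 4.12(2) combined with Lemma 4.10(2), by passing to a quotient of $\Hom$-spaces. The underlying idea is that the projection $\D(\mu) \twoheadrightarrow L(\mu)$ identifies $\Hom_S(S\xi_\al, L(\mu))$ with the cokernel of $\Hom_S(S\xi_\al, \rad\D(\mu)) \hookrightarrow \Hom_S(S\xi_\al, \D(\mu))$, and Lemma 4.12(2) says that this subspace corresponds, under the isomorphism of Lemma 4.10(2), to its analogue over $S'$.

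Concretely, I would first observe that $\xi_\al$ is idempotent by (4.4) applied to the diagonal matrix $\omega = \mathrm{diag}(\al_1,\dots,\al_n)$, so $S\xi_\al$ is a projective $S$-module. Applying $\Hom_S(S\xi_\al, -)$ to the short exact sequence $0 \to \rad\D(\mu) \to \D(\mu) \to L(\mu) \to 0$ therefore produces a short exact sequence of vector spaces, and analogously over $S'$ for $\mu^+$ and $\al^+$. Second, I would verify the hypothesis $\mu_2 \le \al_1$ of Lemma 4.10(2): since $\mu$ is a partition of $r$ with $\mu_1 \ge \mu_2$ one has $\mu_2 \le r/2$, while $\al \unrhd \la$ and $\la_1 \ge r/2$ give $\al_1 \ge \la_1 \ge r/2 \ge \mu_2$. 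Thus Lemma 4.10(2) supplies an isomorphism $\Phi$ between the middle terms of the two short exact sequences, sending $\pi_{\D(\mu)} \circ R_{\xi_\omega}$ to $\pi_{\D(\mu^+)} \circ R_{\xi_{\omega^+}}$ for $\omega \in \Omega(\al,\mu)$.

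The content of Lemma 4.12(2) is precisely that $\Phi$ carries $\Hom_S(S\xi_\al, \rad\D(\mu))$ bijectively onto $\Hom_{S'}(S'\xi_{\al^+}, \rad\D(\mu^+))$, since these subspaces consist exactly of those $g$, respectively $g^+$, whose image is contained in the radical. Passing to cokernels then yields the desired isomorphism $\Hom_S(S\xi_\al, L(\mu)) \to \Hom_{S'}(S'\xi_{\al^+}, L(\mu^+))$. Since $\pi_{L(\mu)}$ is by definition the composition $S\xi_\mu \xrightarrow{\pi_{\D(\mu)}} \D(\mu) \twoheadrightarrow L(\mu)$, the generator $\pi_{\D(\mu)} \circ R_{\xi_\omega}$ descends to $\pi_{L(\mu)} \circ R_{\xi_\omega}$, and likewise on the $\mu^+$ side, so the induced map takes the required form on generators.

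I do not anticipate any genuine obstacle here: the combinatorial heart of the argument (the mod-$p$ multinomial identity and its translation via Corollary 3.4 into an equivalence of radical inclusions) is already encapsulated in Lemma 4.12, so what remains is only a standard homological manipulation using projectivity of $S\xi_\al$ and right exactness of cokernels.
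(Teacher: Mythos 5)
Your proof is correct and follows essentially the same route as the paper's: both rest on projectivity of $S\xi_{\al}$ to lift maps along $\D(\mu)\twoheadrightarrow L(\mu)$, invoke Lemma 4.10(2) for the isomorphism on $\Hom(-,\D(\mu))$, and use the radical-inclusion equivalence (Lemma 4.11(2) in the paper's numbering, which you cite as 4.12(2)) to see that this isomorphism descends; the paper merely phrases the cokernel step as a direct check of well-definedness, injectivity, and surjectivity.
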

\begin{proof} Let $f \in \Hom_S(S\xi_{\al}, L(\mu))$. Since $S\xi_{\al}$ is a projective $S$-module, there is a $f_1 \in \Hom_S(S\xi_{\al}, \D(\mu))$ such that $f=pr_{\mu}\circ f_1$, where $pr_{\mu}:\D(\mu) \to L(\mu)$ is the natural projection. Define $f^+ = pr_{\mu^+} \circ f_1^+$, where $f_1^+ \in \Hom_{S'}(S'\xi_{\al^+}, \D(\mu^+))$ is the image of $f_1$ under the isomorphism of Lemma 4.10(2).
	
	In order to show that the map $f \to f^+$ is well defined, suppose $pr_{\mu} \circ g=0$, where $g \in \Hom_S(S\xi_{\al}, \D(\mu))$. Then $Img \subseteq rad\D(\mu)$. By Lemma 4.11(2), $Img^+ \subseteq rad\D({\mu}^+)$ and hence $pr_{{\mu}^+} \circ g^+=0.$
	
	In light of Lemma 4.11(2), the previous argument can be reversed and thus the map $f \to f^+$ is 1-1. To show that it is onto, let $h \in \Hom_{S'}(S'\xi_{\al^+},L(\mu^+))$. Since $S'\xi_{\al^+}$ is a projective $S'$-module, there is a $h_1 \in \Hom_S'(S'\xi_{\al^+}, \D(\mu^+))$ such that $h=pr_{\mu^+}\circ h_1$. By Lemma 4.10(2), $h_1=g^+$ for some $g \in \Hom_S(S\xi_{\al}, \D(\mu))$.
\end{proof}
\begin{remark}
	Denote by $K_{\al,\mu}$ the dimension of the vector space $\Hom_S(S\xi_{\al}, L(\mu))$. These are known as $p$-Kostka numbers. Under the hypotheses of the previous proposition we have that $K_{\al, \mu} = K_{\al^+, \mu ^+}$. This was proved using different means by Henke in \cite[Corollary 6.2]{He} under the weaker hypothesis $p^d>\la(\al)_2$, where $\la(\al)=(\la(\al)_1,\dots,\la(\al)_n)$ is the partition obtained by rearranging the parts of $\al$. However, Proposition 4.12 says a bit more in that we have an explicit (well defined) isomorphism. This is needed in the proof that follows.
\end{remark}
\subsection{Proof of Theorem 1.1(2)} 

The proof that the complexes $\Hom_S(B_*(\la), L(\mu))$ and $\Hom_{S'}(B_*(\la^+), L(\mu^+))$ are isomorphic is similar to the proof in subsection 4.4; one uses Proposition 4.12 in place of Lemma 4.10(2) and the following remark in place Remark 4.9.
\begin{remark} If $\al \unrhd \la$,  $\mu \unrhd \la$ and $\la_1 \ge \frac{r}{2}$, then $\Hom_S(S\xi_a, L(\mu))$ is generated by maps of the form $S\xi_{a} \xrightarrow{R_{\xi_\omega}} \D(\mu) \xrightarrow{pr_{\mu}} L(\mu)$, where is $\omega$ is lower triangular. Indeed, by Remark 4.9, $\Hom_S(S\xi_a, \D(\mu))$ is generated by maps of the form $S\xi_{a} \xrightarrow{R_{\xi_\omega}} \D(\mu)$, where $\omega$ is lower triangular. Since $S\xi_{a}$ is a projective $S$-module, we have a surjective map $\Hom_S(S\xi_a, \D(\mu)) \to \Hom_S(S\xi_a, L(\mu)) $ induced from the natural projection $\D(\mu) \xrightarrow{pr_{\mu}} L(\mu)$.
\end{remark}
\section{Representations of the symmetric group}
In this section we assume that $p>2$. For $\lambda$ a partition of $n$ we denote by $S^{\lambda}$ the Specht module corresponding to $\lambda$ for the group algebra $K\mathfrak{S}_n$ of the symmetric group $\mathfrak{S}_n$. By a classical result of Carter and Luzstig \cite{CL},  the vector spaces $\Hom_S(\Delta(\nu), \Delta(\nu'))$ and $\Hom_{\mathfrak{S}_n}(S^{\nu'}, S^{\nu})$ are isomorphic for all partitions $\nu, \nu'$ of $n$, where $S=S_K(n,n)$. Hence from Theorem 1.1(1) we obtain the following result.

\begin{corollary}Let $\la=(\la_1,\dots,\la_n)$ and $\mu=(\mu_1,\dots,\mu_n)$  be partitions of $n$. If $p^d >n-\la_1$ and $\mu_2\le \la_1$, then \[\Hom_{\mathfrak{S}_n}(S^{\mu}, S^{\la})\simeq \Hom_{\mathfrak{S}_{n'}}(S^{\mu^+}, S^{\la^+}).\]
\end{corollary}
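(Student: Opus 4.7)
The plan is to obtain this corollary as an immediate consequence of Theorem 1.1(1), specialized to $i=0$ and $r=n$, together with the Carter--Lusztig isomorphism recalled just before the corollary. The strategy splits cleanly into three steps, none of which requires new computations.

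First, I would apply Theorem 1.1(1) with $i=0$ and $r=n$. The hypotheses $p^d > n - \lambda_1$ and $\mu_2 \le \lambda_1$ of the corollary are exactly the hypotheses of Theorem 1.1(1) in this setting, so the theorem supplies the isomorphism
\[
\Hom_S(\Delta(\lambda), \Delta(\mu)) \;\simeq\; \Hom_{S'}(\Delta(\lambda^+), \Delta(\mu^+)),
\]
where $S = S_K(n,n)$ and $S' = S_K(n, n')$ with $n' = n + p^d$. Before invoking this I would quickly verify that $\lambda^+$ and $\mu^+$ are genuine partitions of $n'$: adding $p^d$ to the first (largest) part preserves the weak decreasing condition, and $|\lambda^+| = |\lambda| + p^d = n' = |\mu^+|$.

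Second, I would apply the Carter--Lusztig isomorphism \cite{CL} to each side. Since $p > 2$ is assumed throughout Section 5, Carter--Lusztig gives
\[
\Hom_S(\Delta(\lambda), \Delta(\mu)) \simeq \Hom_{\mathfrak{S}_n}(S^{\mu}, S^{\lambda})
\quad\text{and}\quad
\Hom_{S'}(\Delta(\lambda^+), \Delta(\mu^+)) \simeq \Hom_{\mathfrak{S}_{n'}}(S^{\mu^+}, S^{\lambda^+}).
\]
Composing the three isomorphisms yields the conclusion $\Hom_{\mathfrak{S}_n}(S^\mu, S^\lambda) \simeq \Hom_{\mathfrak{S}_{n'}}(S^{\mu^+}, S^{\lambda^+})$.

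There is no real obstacle: the entire content of the corollary is a transcription of Theorem 1.1(1) (in the case $i=0$, $r=n$) across the Schur functor, via the characteristic-free Carter--Lusztig identification of Hom spaces between Weyl modules and Hom spaces between Specht modules. The only nontrivial input is Theorem 1.1(1) itself, which has just been established; everything else is bookkeeping about the assumption $p>2$ and about the partition-of-$n'$ condition for $\lambda^+$ and $\mu^+$.
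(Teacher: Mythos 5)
Your argument is correct and is precisely the paper's own proof: the corollary is stated there as an immediate consequence of Theorem 1.1(1) with $i=0$, $r=n$, combined with the Carter--Lusztig identification of $\Hom_S(\Delta(\nu),\Delta(\nu'))$ with $\Hom_{\mathfrak{S}_n}(S^{\nu'},S^{\nu})$. No further comment is needed.
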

This provides an answer to a question posed by Hemmer, \cite{Hem2} Problem 5.4.

By a result of Kleshchev and Nakano \cite[Theorem 6.4b(ii)]{KN} we obtain more generally from Theorem 1.1(1) the following.
\begin{corollary}Suppose $p>3$ and $ 0 \le i \le p-2$. Let $\la=(\la_1,\dots,\la_n)$ and $\mu=(\mu_1,\dots,\mu_n)$  be partitions of $n$. If $p^d >n-\la_1$ and $\mu_2\le \la_1$, then \[\Ext^i_{\mathfrak{S}_n}(S^{\mu}, S^{\la})\simeq \Ext^i_{\mathfrak{S}_{n'}}(S^{\mu^+}, S^{\la^+}).\]
\end{corollary}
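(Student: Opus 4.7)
The plan is to chain Theorem 1.1(1) together with two applications of the Kleshchev--Nakano theorem \cite[Theorem 6.4(b)(ii)]{KN}. That theorem upgrades the Carter--Lusztig $\Hom$ isomorphism used in Corollary 5.1: in the range $0\le i\le p-2$ (with $p>3$), the Schur functor induces an isomorphism
\[
\Ext^i_{S_K(n,r)}(\Delta(\la),\Delta(\mu))\;\simeq\;\Ext^i_{\mathfrak{S}_r}(S^{\mu},S^{\la})
\]
for partitions $\la,\mu$ of $r$ with at most $n$ parts. This is precisely the tool needed to transport the periodicity proved at the level of Schur algebras in Theorem 1.1(1) across the Schur functor to the symmetric group.

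I would proceed in three steps. First, apply \cite[Theorem 6.4(b)(ii)]{KN} with $r=n$ to rewrite
\[
\Ext^i_{\mathfrak{S}_n}(S^{\mu},S^{\la}) \;\simeq\; \Ext^i_{S}(\Delta(\la),\Delta(\mu)),
\]
where $S=S_K(n,n)$. Second, apply Theorem 1.1(1) with $r=n$: its hypotheses $\mu_2\le \la_1$ and $p^d>r-\la_1=n-\la_1$ are exactly what is assumed in the corollary, so one gets
\[
\Ext^i_{S}(\Delta(\la),\Delta(\mu))\;\simeq\;\Ext^i_{S'}(\Delta(\la^+),\Delta(\mu^+)),
\]
with $S'=S_K(n,n+p^d)=S_K(n,n')$. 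Third, apply \cite[Theorem 6.4(b)(ii)]{KN} once more, this time with $r=n'$, to the pair $(\la^+,\mu^+)$, obtaining
\[
\Ext^i_{S'}(\Delta(\la^+),\Delta(\mu^+))\;\simeq\;\Ext^i_{\mathfrak{S}_{n'}}(S^{\mu^+},S^{\la^+}).
\]
Composing these three isomorphisms yields the corollary.

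Since Theorem 1.1(1) already does the substantive homological work, there is no genuine obstacle left: the corollary is a translation through the Schur functor. The only verification that must be made carefully is that the hypotheses of the Kleshchev--Nakano theorem hold at both ends of the chain. On the $(\la,\mu)$ side this is immediate from $p>3$ and $0\le i\le p-2$. On the $(\la^+,\mu^+)$ side one uses that adding $p^d$ to the first part of a partition preserves being a partition with at most $n$ parts, so $\la^+$ and $\mu^+$ sit inside the Schur algebra $S_K(n,n')$ with $n'=n+p^d$ and \cite[Theorem 6.4(b)(ii)]{KN} applies in exactly the same cohomological range. No further combinatorial input is needed.
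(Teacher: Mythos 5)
Your proof is correct and is exactly the argument the paper intends: Corollary 5.2 is stated as an immediate consequence of Theorem 1.1(1) combined with the Kleshchev--Nakano result \cite[Theorem 6.4(b)(ii)]{KN}, which is precisely your three-step chain. The only small point the paper (and your write-up) leave implicit is that on the $(\la^+,\mu^+)$ side the degree $n'=n+p^d$ exceeds the number of variables $n$, so to apply the Schur functor one first uses the standard fact that $\Ext$ between Weyl modules is unchanged when the number of variables is enlarged (here from $S_K(n,n')$ to $S_K(n',n')$), after which the Kleshchev--Nakano theorem applies.
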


Let $H^i(\mathfrak{S}_n, 
S^{\la })$ be the degree $i$ cohomology group of $\mathfrak{S}_n$ with coefficients in $S^{\lambda}$. By a result of Kleshchev and Nakano \cite[Corollary 6.3b(iii)]{KN}, \[H^i(\mathfrak{S}_n, S^{\la }) \simeq \Ext^i_S( \nabla(n), \nabla (\la)   ),\] if $0 \le i \le 2p-4$, where $\nabla (\la)$ is the costandard module of $G$ corresponding to $\la$. By contravariant duality, \cite[eqn. 4.13 (3)]{Jan}, we have \[\Ext^i_S( \nabla(n), \nabla (\la)) \simeq \Ext^i_S( \D(\la), \D(n)).\] Therefore from Theorem 1.1(1) we obtain the following corollary.

\begin{corollary}
	Let $\la$ be a partition of $n$. If $p^d >n-\la_1$, then \[H^i(\mathfrak{S}_n, S^{\la }) \simeq H^i(\mathfrak{S}_{n+p^d}, S^{\la^+ }), \] for $0 \le i \le 2p-4$. 
\end{corollary}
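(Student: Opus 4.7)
The plan is to chain together the isomorphisms explicitly recalled in the paragraph just above the statement and then apply Theorem 1.1(1) with the distinguished partition $\mu=(n)$.

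First, I would invoke the Kleshchev--Nakano isomorphism \cite[Corollary 6.3b(iii)]{KN}, valid for $0\le i\le 2p-4$, to convert the symmetric group cohomology on both sides into extension groups over the appropriate Schur algebra:
\[ H^i(\mathfrak{S}_n, S^{\la}) \simeq \Ext^i_S(\nabla(n),\nabla(\la)), \qquad H^i(\mathfrak{S}_{n'}, S^{\la^+}) \simeq \Ext^i_{S'}(\nabla(n'),\nabla(\la^+)), \]
where $n'=n+p^d$, so that $(n)^+=(n+p^d)=n'$ and $\la^+$ really is a partition of $n'$. Next, contravariant duality (\cite[II.4.13(3)]{Jan}) converts each $\Ext$ between costandard modules into the corresponding $\Ext$ between Weyl modules:
\[ \Ext^i_S(\nabla(n),\nabla(\la)) \simeq \Ext^i_S(\D(\la),\D(n)), \qquad \Ext^i_{S'}(\nabla(n'),\nabla(\la^+)) \simeq \Ext^i_{S'}(\D(\la^+),\D(n')). \]

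Second, I would apply Theorem 1.1(1) with $r=n$ and $\mu=(n)$. The two hypotheses need to be verified: the condition $\mu_2\le\la_1$ is automatic because $\mu_2=0$, and the condition $p^d>r-\la_1=n-\la_1$ is exactly the hypothesis of the corollary. Theorem 1.1(1) then yields an isomorphism
\[ \Ext^i_S(\D(\la),\D(n)) \simeq \Ext^i_{S'}(\D(\la^+),\D(n^+)) \]
for every $i$, in particular in the range $0\le i\le 2p-4$ that interests us. Composing this with the two previously displayed chains (read the second one backwards with $n$ replaced by $n'$ and $\la$ by $\la^+$) delivers the desired isomorphism $H^i(\mathfrak{S}_n, S^{\la})\simeq H^i(\mathfrak{S}_{n'}, S^{\la^+})$.

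There is no real obstacle here: the proof is essentially a verification that the assumptions match up and that the range of $i$ is preserved. The only subtle point worth noting is that the restriction $0\le i\le 2p-4$ comes entirely from the Kleshchev--Nakano step; the middle isomorphism provided by Theorem 1.1(1) is valid in all cohomological degrees, so the range is not narrowed further. The hypothesis $p>2$ imposed throughout Section 5 is used implicitly to ensure the Kleshchev--Nakano identification is available.
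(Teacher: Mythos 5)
Your proof is correct and follows essentially the same route as the paper: convert symmetric group cohomology to $\Ext$ between costandard modules via Kleshchev--Nakano, then to $\Ext$ between Weyl modules via contravariant duality, and finally apply Theorem 1.1(1) with $\mu=(n)$. Your explicit check that the hypotheses of Theorem 1.1(1) hold (in particular $\mu_2=0\le\la_1$) and the observation that the degree restriction $0\le i\le 2p-4$ comes solely from the Kleshchev--Nakano step are both accurate and in keeping with the paper's reasoning.
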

\begin{remarks} (1) It is well known that the case $i=0$ in the above corollary holds. This follows from the explicit description of $H^0(\mathfrak{S}_n, S^{\la })$ given by James  \cite[Theorem 24.4]{Jam}. The case $i=1$ under the slightly stronger assumption $p^d>n$ is due to Hemmer \cite[Theorem 7.1.8]{Hem1} with a completely different proof. An explicit description of $H^1(\mathfrak{S}_n,S^{\la })$ was obtained by Donkin and Geranios in \cite{DG}.\\
	(2) A related but different result has been obtained by Nagpal and Snowden \cite[Theorem 5.1]{NS}. For example, their theorem assumes $\la$ (denoted by $\mu[n]$) satisfies $\la_1-\la_2 \ge \la_2+\dots+\la_n$. Also, an upper bound on $i$ is determined by $\la_1$ in \cite{NS}, whereas in the above corollary an upper bound is determined by $p$.\\(3) The above corollary provides an answer to the special case of Problem 8.3.1 of \cite{Hem1} corresponding to $c=(1)$.
\end{remarks}
\section{Better bound on the period}
The purpose of this section is to observe that in two examples, the bound $p^d > r-\la_1$ in the statement of Theorem 1.1(1) may be improved. This is done using specific projective resolutions for these examples that involve fewer terms than those of \cite{SY}. The proofs are similar (and simpler) but we provide details for completeness. From Section 4 we use only Lemma 4.1.
\subsection{First example: Hom spaces}
\begin{theorem}Let $\la=(\la_1,\dots,\la_n)$, $\mu=(\mu_1,\dots,\mu_n)$  be partitions of $r$. Suppose $p^d >\min\{\la_2, \mu_1-\la_1\}$ and $\mu_2\le \la_1$. Then   $\Hom_S(\Delta(\la), \Delta(\mu)) \simeq \Hom_{S'}(\Delta(\la^+), \Delta(\mu^+)).$
\end{theorem}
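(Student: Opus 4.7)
The plan is to replace the Santana--Yudin resolution by a shorter presentation of $\D(\la)$, since for $\Hom$ only the beginning of a projective resolution is needed. I would use the Akin--Buchsbaum--Weyman style presentation
\[
\bigoplus_{i,t} D(\la_1,\dots,\la_i+t,\la_{i+1}-t,\dots,\la_n) \to D(\la) \to \D(\la) \to 0,
\]
indexed by $1 \le i \le n-1$ and $1 \le t \le \la_{i+1}$, where the boundary is a sum of ``box maps'' built from comultiplication and multiplication in the Hopf algebra $DV$. Writing $\al^{(i,t)} = \la + t(e_i-e_{i+1})$ for the weight of the $(i,t)$-summand and applying $\Hom_S(-,\D(\mu))$, Proposition 2.4 identifies $\Hom_S(\D(\la),\D(\mu))$ with the kernel of an explicit map $\D(\mu)_\la \to \bigoplus_{i,t} \D(\mu)_{\al^{(i,t)}}$.

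Each $\al^{(i,t)}$ dominates $\la$ and satisfies $\al^{(i,t)}_1 \ge \la_1 \ge \mu_2$, so Lemma 3.3(2) furnishes isomorphisms $\D(\mu)_{\al^{(i,t)}} \simeq \D(\mu^+)_{(\al^{(i,t)})^+}$ (and similarly for $\la$ itself). The argument of subsection 4.4, applied to this abbreviated two-term complex, reduces the comparison to showing that the boundary maps commute with these vertical isomorphisms; by Lemma 4.10(1) this in turn reduces to multinomial congruences $[\theta] \equiv [\theta^+] \pmod p$ governed by Lemma 4.1.

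The disjunctive bound $p^d > \min\{\la_2,\mu_1-\la_1\}$ corresponds to two independent ways of controlling the binomial shifts that arise. If $p^d > \la_2$, then for $i=1$ the shift satisfies $t \le \la_2$, and for $i \ge 2$ the top entry $\al^{(i,t)}_1 = \la_1$ is unchanged by the box map, so in either case the relevant binomials are congruent mod $p$ to their $+$-shifted versions by Lemma 4.1(1). If instead $p^d > \mu_1 - \la_1$, then the weight space $\D(\mu)_{\al^{(1,t)}}$ vanishes unless $\al^{(1,t)}_1 = \la_1+t \le \mu_1$, forcing $t \le \mu_1-\la_1$ on the contributing $i=1$ summands, while the $i \ge 2$ summands again leave the top entry untouched; Lemma 4.1(1) applies once more.

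The main obstacle will be verifying that every binomial coefficient produced when one expands the box maps in the divided power basis and translates to the Schur algebra falls under one of these two bounds, so that the stronger hypothesis $p^d > r - \la_1$ of Theorem 1.1(1) is never invoked. Once this bookkeeping is complete, the commutativity argument of subsection 4.4 adapts directly to the two-term complex and produces the desired isomorphism.
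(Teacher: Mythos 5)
Your proposal is correct and follows essentially the same route as the paper: the Akin--Buchsbaum--Weyman presentation of $\D(\la)$, left exactness of $\Hom_S(-,\D(\mu))$, the vertical isomorphisms from Lemma 3.3(2), and the reduction of commutativity to binomial congruences via Lemma 4.1(1), with the same split between the $i=1$ summands (where both $t\le\la_2$ and the vanishing for $\la_1+t>\mu_1$ yield the bound $\min\{\la_2,\mu_1-\la_1\}$) and the $i\ge 2$ summands (where the first entry is untouched). The only cosmetic difference is that you phrase the coefficient check via Lemma 4.10(1) and Schur-algebra products, whereas the paper computes directly with the divided-power box maps.
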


\begin{example}We note that if either of the assumptions $p^d >\min\{\la_2, \mu_1-\la_1\}$ and $\mu_2 \le \la_1$ of Theorem 6.1 is relaxed, then the conclusion is not necessarily true, as the following examples show. First, let $p=3, \la=(8,3), \mu=(11)$ and $d=1$ so that $\la^+=(11,3)$ and $\mu^+=(14)$. Here we have $p^d =\min\{\la_2, \mu_1-\la_1\}$. The dimensions of the spaces $\Hom_S(\Delta(\la), \Delta(\mu)), \Hom_{S'}(\Delta(\la^+), \Delta(\mu^+))$ are 1 and 0 respectively. Second, let $p=3, \la=(1,1,1,1), \mu=(2,2)$ and $d=1$ so that $\la^+=(4,1,1,1)$ and $\mu^+=(5,2)$. The dimensions of the corresponding Hom spaces are 1 and 0 respectively. These dimensions follow, for example, from  Theorem 3.1 of \cite{MS1} since in each case a hook is involved.
	
	We note that the previous theorem was proven in the unpublished manuscript \cite{MS4} which is superseded by the present paper.
	
\end{example}

\noindent\textit{Proof of Theorem 6.1.} Recall from \cite[Theorem II.3.16]{ABW} that we have the following beginning of a resolution of $\Delta(\la)$, \[
\sum_{i=1}^{n-1}\sum_{t=1}^{\lambda_{i+1}}M_i(t) \xrightarrow{\square_{\la}} D(\lambda) \xrightarrow{\pi_{\D(\la)}} \Delta(\lambda) \to 0,\]
where the restriction of $ \square_{\la} $ to the summand $M_i(t)=D(\lambda_1,\dots,\lambda_i+t,\lambda_{i+1}-t,\dots,\lambda_n)$ is the composition
\[
M_i(t) \xrightarrow{1\otimes\cdots \otimes \Delta \otimes \cdots \otimes 1}D(\lambda_1,\dots,\lambda_i,t,\lambda_{i+1}-t,\dots,\lambda_m)\xrightarrow{1\otimes\cdots \otimes m \otimes \cdots \otimes 1} D(\lambda),
\]
where $\Delta:D(\lambda_i+t) \to D(\lambda_i,t)$ and $m:D(t,\lambda_{i+1}-t) \to D(\lambda_{i+1})$ are the indicated components of the comultiplication and multiplication respectively of the Hopf algebra $DV$. Likewise we have the exact sequence
$\sum_{i=1}^{n-1}\sum_{t=1}^{\lambda_{i+1}}M^+_i(t) \xrightarrow{\square_{\la^+}} D(\lambda^+) \xrightarrow{d'_{\lambda^+}} \Delta(\lambda^+) \to 0,$
where $M^+_i(t)=D(\lambda_1^+,\dots,\lambda_i+t,\lambda_{i+1}-t,\dots,\lambda_n)$

By left exactness of the functors $\Hom_{S}(-,\Delta(\mu))$ and $\Hom_{S'}(-,\Delta(\mu^+))$, in order to prove the theorem it suffices to show that the following diagram commutes
\begin{center}
	\begin{tikzcd}
		\Hom_S(D(\lambda),\Delta(\mu))\arrow{d}{\psi} \arrow{rrrr}{\pi_{i,t} \circ \Hom_S(\square_{\la},\Delta({\mu}))}
		&&&& \Hom_S(M_i(t),\Delta(\mu)) \arrow{d}{\psi_{i,t}} \\ \arrow{rrrr}{\pi^+_{i,t} \circ \Hom_{S'}(\square_{\la^+},\Delta({\mu^+}))}
		\Hom_{S'}(D(\lambda^+),\Delta(\mu^+)) 
		&&&& \Hom_{S'}(M^+_i(t),\Delta(\mu^+))
	\end{tikzcd}
\end{center}
for all $i=1,\dots,n-1$ and $t=1,\dots,\la_{i+1}$, where the vertical maps are isomorphisms from Lemma 3.3(2) and $\pi_{i,t}$ (respectively, $\pi^+_{i,t}$) is the natural projection $\Hom_S(\sum_{i=1}^{n-1}\sum_{t=1}^{\lambda_{i+1}}M_i(t),\Delta(\mu)) \to \Hom_S(M_i(t),\Delta(\mu))$ (respectively, \\ $\Hom_{S'}(\sum_{i=1}^{n-1}\sum_{t=1}^{\lambda_{i+1}}M^+_i(t),\Delta(\mu^+)) \to \Hom_{S'}(M^+_i(t),\Delta(\mu^+))$ ).

For $i=1,\dots,n-1$ and $t=1,\dots,\la_{i+1}$, let $
x_{i,t}=1^{(\la_1)} \otimes \cdots \otimes  n^{(\la_n)} \in M_i(t)$ and $x^+_{i,t}=1^{(\la_1+p^d)} \otimes \cdots  \otimes n^{(\la_n)} \in M^+_{i}(t).$
We know that $M_i(t)$ is a cyclic $S$-module with generator $x_{i,t}$ and $M^+_i(t)$ is a cyclic $S'$-module with generator $x^+_{i,t}$.

Let $ T \in \st$. Then \[ T=\begin{matrix*}[l]
	1^{(\la_1)}2^{(a_{12})}3^{(a_{13})} \cdots n^{(a_{1n})} \\
	2^{(a_{22})}3^{(a_{23})} \cdots n^{(a_{2n})} \\
	\cdots\\
	n^{(a_{nn})}  \end{matrix*}, \]
where $\sum_{i}a_{ij}=\la_j$ for $j=2,\dots,n$. We know that the set $\{\pi_{\D(\mu)} \circ\phi_T: T 
\in \st \}$ generates $\Hom_S(D(\la), \Delta(\mu))$ by Proposition 2.4. We distinguish two cases.

\textit{Case $i=1$.} Consider $x_{1,t}$, where $1 \le t \le \la_2$. By computing we see the image of $x^{+}_{1,t}$ under the map $\psi_{1,t}\circ \pi_{1,t} \circ \Hom_S(\square_{\la},\Delta({\mu}))(\pi_{\D(\mu)} \circ\phi_T)$ is  \begin{equation}\sum_{j_1+j_2=t}\tbinom{\la_1+j_1}{j_1}\begin{bmatrix*}[l]
		1^{(\la_1+p^d+j_1)}2^{(a_{12}-j_1)}3^{(a_{13})} \cdots n^{(a_{1n})} \\
		1^{(j_2)}2^{(a_{22-j_2})}3^{(a_{23})} \cdots n^{(a_{2n})} \\
		\cdots\\
		n^{(a_{nn})}  \end{bmatrix*}.
\end{equation}
A similar computation in the other direction of the diagram yields that the image of   $x^{+}_{1,t}$ under the map $\pi_{1,t}^+ \circ \Hom_S(\square_{\la^+},\Delta({\mu^+}))\circ \psi(\pi_{\D(\mu)} \circ\phi_T)$ is  \begin{equation}\sum_{j_1+j_2=t}\tbinom{\la_1+p^d+j_1}{j_1}\begin{bmatrix*}[l]
		1^{(\la_1+p^d+j_1)}2^{(a_{12}-j_1)}3^{(a_{13})} \cdots n^{(a_{1n})} \\
		1^{(j_2)}2^{(a_{22-j_2})}3^{(a_{23})} \cdots n^{(a_{2n})} \\
		\cdots\\
		n^{(a_{nn})}  \end{bmatrix*}.
\end{equation} By Lemma 3.2(i) we may assume in eqns.(6.1) and (6.2) that $\la_1+p^d+t \le \mu_1+p^d$, that is, $\la_1 +t \le \mu_1$. By Lemma 4.1(1) we have $\tbinom{\la_1+j_1}{j_1}= \tbinom{\la_1+p^d+j_1}{j_1}$ in $K$ since $p^d>\min\{\la_2, \mu_1-\la_1\} \ge j_1$. Thus the expressions in (6.1) and (6.2) are equal. Since $x^{+}_{1,t}$ generates the $S'$-module $M^+_1(t)$, we conclude the diagram commutes.

\textit{Case $i>1$.} This is similar to the previous case, but simpler. Consider $x_{i,t}$, where $i>1$ and $1 \le t \le \la_{i+1}$. Then the image of $x^+_{i,t}$ under the map $\psi_{i,t}\circ\pi_{i,t} \circ \Hom_S(\square_{\la},\Delta({\mu}))(\pi_{\D(\mu)} \circ\phi_T)$ is
\begin{align*}\nonumber&\;\\ \sum_{j_1+\dots+ j_{i+1}=t}\tbinom{a_{1i}+j_1}{j_1}\cdots\tbinom{a_{ii}+j_i}{j_i}&\begin{bmatrix*}[l]
		1^{(\la_1+p^d)}\cdots i^{(a_{1i}+j_1)}(i+1)^{(a_{1i+1}-j_1)} \cdots n^{(a_{1n})} \\
		2^{(a_{22})}\cdots i^{(a_{1i}+j_2)}(i+1)^{(a_{2i+1}-j_2)} \cdots n^{(a_{2n})}  \\
		3^{(a_{33})}\cdots i^{(a_{3i}+j_3)}(i+1)^{(a_{3i+1}-j_3)} \cdots n^{(a_{3n})}\\
		\cdots\\
		n^{(a_{nn})}  \end{bmatrix*}\end{align*}
and this is equal to image of $x^+_{i,t}$ under the map $\pi_{i,t}^+ \circ \Hom_{S'}(\square_{\la^+},\Delta({\mu^+}))\circ \psi(\pi_{\D(\mu)} \circ\phi_T)$. Hence the diagram commutes.
\subsection{Second example: $\lambda$ is a hook}
Suppose $\la \in \Lambda^+(n,r)$ is a hook, that is, a partition of the form $h=(a,1^b)$, where $b \ge 0$ and $a+b=r$. We will show in this case that the bound $p^d>b$ in Theorem 1.1(1) may be improved slightly to $p^d>i$. This is indeed an improvement because for $i>b$ both $\Ext^i_{S}(\D(h), \D(\mu))$ and $\Ext^i_{S'}(\D(h ^+), \D(\mu ^+))$ are zero since both $\D(h)$ and $\D(h^+)$ have projective resolutions over $S$ and $S'$ respectively of length $b$ \cite{Ma}. 

We use the the explicit finite projective resolution $P_{*}(a,b)$ of $ \triangle (h)$,
$0 \rightarrow \dots \rightarrow P_2(a,b) \xrightarrow{\theta_{2}(a,b)} P_1(a,b) \xrightarrow{\theta_{1}(a,b)} P_0(a,b)
$ of \cite{Ma} which we now recall. We have $P_i(a,b)=\sum D(\al_1,\dots,\al_{b+1-i}),$
where the sum ranges over all sequences $(\al_1,\dots,\al_{b+1-i})$ of positive integers of length $b+1-i$ such that $\al_1+\dots+\al_{b+1-i}=a+b$ and $a \le \al_1 \le a+i$. For further use we note that \begin{equation}\al_j \le i+1  \end{equation} for all $j >1$. The differential $\theta_{i}(a,b), i \ge 1$, is defined be sending $x_1 \otimes \dots \otimes x_{b+1-i} \in D(\al_1,\dots,\al_{b+1-i})$ to
\[ \sum_{j=1}^{b+1-i} (-1)^{j+1}x_1 \otimes \dots\otimes \triangle(x_j) \otimes \dots  \otimes x_{b+1-i} \in D(\al_1,\dots,u,v,\dots,\al_{b+1-i}),
\]
where  $\triangle(x_j)$ is the image of $x_j$ under the two-fold diagonalization $D(\al_j) \rightarrow \sum D(u,v)$, where the sum ranges of all positive integers $u,v$ such that $u+v=\al_j$ and $D(\al_1,\dots,u,v,\dots,\al_{b+1-i})$ is a summand of $P_{i-1}(a,b)$ with $u$ located in position $j$.

For $T \in \mathrm{SST}_{\al}(\mu)$ and $t \in \{1,\dots,m\}$, where $\al =(\al_1, \dots,\al_m)$, let $T(t) \in \mathrm{Tab}(\mu)$ be obtained from $T$ by replacing each occurrence of $j>t$ by $j-1$. Thus $\pi_{\D(\mu)} \circ\phi_{T(t)} \in \Hom_S(D(\al_1,\dots,\al_t+\al_{t+1},\dots,\al_m),\D(\mu))$ and by extending linearly we obtain for each degree $i$ a map of vector spaces 
$$\Phi_{a,t}: \Hom_S(P_{i-1}(a,b),\D(\mu)) \rightarrow \Hom_S(P_{i}(a,b),\D(\mu)), \pi_{\D(\mu)} \circ \phi_T \mapsto \pi_{\D(\mu)} \circ \phi_{T(t)}.$$ We have suppressed the dependence of $\Phi_{a,t}$ on $i$ and $\mu$ in order to avoid overwhelming notation. According to \cite{MS2}, Remark 2.2, we have the following description for the differential of $\Hom_S(P_*(a,b),\D(\mu))$, \[\Hom_S(\theta_{i-1}(a,b),\D(\mu)) = \sum_{t \ge 1}(-1)^{t-1}\Phi_{a,t},\]
where it understood that $\phi_{T(t)}=0$ if $t>m$. 

\begin{theorem}
	Let $\la=(a,1^b)$, $\mu=(\mu_1,\dots,\mu_n)$  be partitions of $r$ and let $d$ be an integer. If $p^d >i$, then \[\Ext^i_{S}(\D(h), \D(\mu)) \simeq\Ext^i_{S'}(\D(h^+), \D(\mu ^+)).\]
\end{theorem}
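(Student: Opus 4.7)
The plan is to adapt the argument in Theorem 6.1, using in place of the initial segment of the ABW resolution the explicit finite projective resolution $P_*(a,b) \to \D(h) \to 0$ from \cite{Ma}. The goal is to establish an isomorphism of Hom complexes $\Hom_S(P_*(a,b), \D(\mu)) \simeq \Hom_{S'}(P_*(a^+,b), \D(\mu^+))$ in degrees $i-1, i, i+1$, which upon taking cohomology yields the desired $\Ext^i$ isomorphism. Since $P_*(a,b)$ has length $b$, the case $i > b$ is vacuous.

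First I would establish a bijection between the summands of $P_j(a,b)$ and $P_j(a^+,b)$: a summand $D(\al_1, \al_2, \ldots, \al_{b+1-j})$ with $a \le \al_1 \le a+j$ corresponds to $D(\al_1+p^d, \al_2, \ldots, \al_{b+1-j})$ with $a+p^d \le \al_1+p^d \le a+p^d+j$, while the remaining entries (bounded by (6.3)) are unchanged. On each summand Lemma 3.3(2) provides an isomorphism $\Hom_S(D(\al), \D(\mu)) \to \Hom_{S'}(D(\al^+), \D(\mu^+))$ sending $\pi_{\D(\mu)} \circ \phi_T$ to $\pi_{\D(\mu^+)} \circ \phi_{T^+}$, provided $\mu_2 \le \al_1$; since $\al_1 \ge a = \la_1$ for every summand, this reduces to the (implicit) hypothesis $\mu_2 \le \la_1$ inherited from Theorem 1.1(1).

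Next I would verify compatibility with the differentials. Using the description $\Hom_S(\theta_j, \D(\mu)) = \sum_{t \ge 1}(-1)^{t-1}\Phi_{a,t}$ from \cite{MS2}, the verification splits into two cases. For $t > 1$, the operation $T \mapsto T(t)$ only affects entries of value $> t \ge 2$, hence the $1$'s in the top row (including the extra $p^d$ ones in $T^+$) are preserved; commutativity with the shift $T \mapsto T^+$ is then immediate. For $t = 1$, a direct computation on generators parallel to the case $i = 1$ of Theorem 6.1 produces multinomial coefficients $\tbinom{\al_1+j_1}{j_1}$ on the unshifted side versus $\tbinom{\al_1+p^d+j_1}{j_1}$ on the shifted side, where $j_1$ is the splitting parameter in the diagonalization of the first tensor factor. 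Lemma 4.1(1) gives the required congruence modulo $p$ as soon as $p^d > j_1$; the constraint $\al_1 \le a+j$ together with (6.3) bounds $j_1 \le j$, and a closer look at which summands actually contribute to the differentials at degrees $i-1$ and $i$ sharpens this to $j_1 \le i$, which matches the hypothesis $p^d > i$.

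The main obstacle is the coefficient comparison in the $t = 1$ case and in particular pinning down the sharp bound $j_1 \le i$ that arises from the specific structure of $P_*(a,b)$. Once commutativity is established, taking cohomology of the now-isomorphic complexes yields the stated $\Ext^i$ isomorphism.
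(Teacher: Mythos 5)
Your proposal follows the paper's proof essentially verbatim: the resolution $P_*(a,b)$ from \cite{Ma}, the description $\Hom_S(\theta_*,\D(\mu))=\sum_t(-1)^{t-1}\Phi_{a,t}$ from \cite{MS2}, the observation that only the $t=1$ component produces a discrepant binomial coefficient, and Lemma 4.1(1) to close it under the bound $p^d>i$ coming from (6.3). You are also right to flag the implicit hypothesis $\mu_2\le\la_1$ required for Lemma 3.3(2), which the paper's Theorem 6.3 omits from its statement though its proof relies on it.
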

\begin{proof}In order to show that the complexes $\Hom_S(P_{*}(a,b),\D(\mu))$ and $\Hom_{S'}(P_*(a+p^d,b),\D(\mu^+))$ are isomorphic it suffices to show that the following diagram commutes
	\begin{center}
		\begin{tikzcd}
			\Hom_S(D(\al),\Delta(\mu))\arrow{d}{\psi} \arrow{rrr}{ \Hom_S(\Phi_{a,t},\Delta({\mu}))}
			&&& \Hom_S(D(\beta),\Delta(\mu)) \arrow{d}{\psi_{i,t}} \\ \arrow{rrr}{\Hom_{S'}(\Phi_{a+p^d,t},\Delta({\mu}))}
			\Hom_{S'}(D(\al^+),\Delta(\mu^+)) 
			&&& \Hom_{S'}(D(\beta^+),\Delta(\mu^+))
		\end{tikzcd}
	\end{center}
	where $\al=(\al_1,\dots,\al_t, \al_{t+1},\dots,\al_{b+2-i})$, $\beta=(\al_1,\dots,\al_t+\al_{t+1},\dots,\al_{b+2-i})$ and the vertical maps are isomorphisms from Lemma 3.3(2). Note that in the diagram, the module $\Hom_S(D(\al),\Delta(\mu))$ is a summand of $\Hom_S(P_{i-1}(a,b),\D(\mu))$ of degree $i-1$ and thus by eqn. (6.3) we have $a_j \le (i-1)+1=i $ for all $j>1$.
	
	Let $T \in \sta$, \[ T=\begin{matrix*}[l]
		1^{(a_{11})}2^{(a_{12})}3^{(a_{13})} \cdots n^{(a_{1n})} \\
		2^{(a_{22})}3^{(a_{23})} \cdots n^{(a_{2n})} \\
		\cdots\\
		n^{(a_{nn})}  \end{matrix*}. \]
	By computing we have that the image of $\pi_{\D(\mu)} \circ\phi_T \in \Hom_S(D(\al),\Delta(\mu))$ in the clockwise direction of the above diagram is $\prod_{j=t}^{n-1}\binom{a_{1j}+a_{1j+1}}{a_{1j+1}}\pi_{\D(\mu^+)} \circ\phi_{T(t)^+}$
	and in the counterclockwise direction is 
	\[ \begin{cases}\binom{a_{11}+p^d+a_{12}}{a_{12}}\prod_{j=2}^{n-1}\binom{a_{1j}+a_{1j+1}}{a_{1j+1}}\pi_{\D(\mu^+)} \circ\phi_{T(t)^+}, & t=1 \\ \prod_{j=t}^{n-1}\binom{a_{1j}+a_{1j+1}}{a_{1j+1}}\pi_{\D(\mu ^+)} \circ\phi_{T(t)^+}, &t>1.\end{cases}\]
	Using the hypothesis we have $p^d>i \ge \al_j =\al_{12}+\al_{21} \ge \al_{12}$ and thus by Lemma 4.1(1), $\binom{a_{11}+p^d+a_{12}}{a_{12}}=\binom{a_{11}+a_{12}}{a_{12}}$ in $K$. Hence the diagram commutes.
\end{proof}

\section{Statements and Declarations}The authors declare they have no conflict of interest.
\section{Acknowledgments}
The authors wish to thank the anonymous referee for useful comments.

\end{document}